\newcolumntype{R}[1]{>{\raggedleft\arraybackslash }b{#1}}
\newcolumntype{L}[1]{>{\raggedright\arraybackslash }b{#1}}
\newcolumntype{C}[1]{>{\centering\arraybackslash }b{#1}}
\newtheorem{teo}{Theorem}[section]
\newtheorem{prop}{Proposition}[section]
\newtheorem{corol}{Corollary}[section]
\newtheorem{lemm}{Lemma}[section]
\newtheorem{remark}{Remark}[section]
\newtheorem{ex}{Example}[section]
\newtheorem{defin}{Definition}[section]
\renewcommand{\a}{\alpha}
\newcommand{\beq}{\begin{equation}}
\newcommand{\eeq}{\end{equation}}
\newcommand{\bqn}{\begin{eqnarray}}
\newcommand{\eqn}{\end{eqnarray}}
\newcommand{\bqne}{\begin{eqnarray*}}
\newcommand{\eqne}{\end{eqnarray*}}
\newcommand{\R}{{\mathbb R}}
\newcommand{\C}{{\mathbb C}}
\newcommand\justify{%
  \let\\\@centercr
  \rightskip\z@skip
  \leftskip\z@skip}
\definecolor{MyBlue}{RGB}{0,0,255}
\begin{document}

\title{Locally Conformal SKT  structures}

\author{Bachir Djebbar}
\address[B. Djebbar]{Department of Mathematics,  Faculty of Mathematics and Informatics,  University of Science and Technology of Oran Mohamed Boudiaf El Mnaouar,   Bir El Djir,  Oran,  31000 Algeria}
\email{bachir.djebbar@univ-usto.dz}

\author{Ana Cristina Ferreira}
\address[A. C. Ferreira]{Centro de Matem\'atica, Universidade do Minho,  Campus de Gualtar,  4710-057 Braga,  Portugal}
\email{anaferreira@math.uminho.pt}

\author{Anna Fino}
\address[A. Fino]{Dipartimento di Matematica “G. Peano”, Universita` di Torino, Via Carlo Alberto 10, 10123 Torino, Italy  \& Department of Mathematics and Statistics Florida International University
Miami Florida, 33199, USA } \email{annamaria.fino@unito.it,  afino@fiu.edu}
	
\author{Nourhane Zineb Larbi Youcef}
\address[N. Z. Larbi Youcef]{Department of Mathematics,  Faculty of Mathematics and Informatics,  University of Science and Technology of Oran Mohamed Boudiaf El Mnaouar,   Bir El Djir,  Oran,  31000 Algeria}
\email{zineb.larbiyoucef@univ-usto.dz}

\date{\today}

\subjclass[2010]{53C15, 53C30, 53C55}

\keywords{Hermitian  metrics, Locally  conformal SKT metrics, nilpotent Lie algebras, almost abelian Lie  algebras}

\begin{abstract}
A Hermitian metric on a complex manifold is called SKT (strong K\"ahler with torsion) if the Bismut  torsion  $3$-form $H$  is closed. As the conformal generalization of the SKT condition,  we introduce a new  type of Hermitian structure,  called \emph{locally conformal   SKT} (or shortly LCSKT).  More precisely,  a Hermitian structure $(J,g)$  is said to be   LCSKT if there exists a closed  non-zero $1$-form $\alpha$ such that $d H = \alpha \wedge  H$.  In the paper we consider  non-trivial LCSKT structures, i.e. we  assume that $d H \neq 0$  and we study their existence on Lie groups and their compact quotients by lattices.

In particular,  we  classify  6-dimensional  nilpotent Lie algebras  admitting a  LCSKT structure and we  show that, in contrast to the SKT case,  there exists a $6$-dimensional  $3$-step nilpotent Lie algebra admitting  a non-trivial LCSKT structure.  Moreover, we show  a characterization of even dimensional  almost abelian  Lie  algebras  admitting  a non-trivial LCSKT structure, which allows us to construct explicit examples of $6$-dimensional unimodular  almost   abelian  Lie algebras  admitting  a non-trivial LCSKT structure.  The  compatibility between  the LCSKT and  the balanced condition is  also discussed,  showing  that a Hermitian structure on a 6-dimensional nilpotent or a  $2n$-dimensional almost abelian
 Lie  algebra  cannot be simultaneously LCSKT and balanced, unless it is K\"{a}hler.
\end{abstract}

\maketitle

\section{Introduction}

Among Hermitian metrics, an important class  is given by the strong K\"{a}hler with torsion (SKT) (or pluriclosed)  which may be characterized by  the condition $\partial \overline \partial \Omega=0,$ where $\Omega$ is the fundamental form.  An equivalent geometrical meaning is given in  terms of  the  Bismut (or Strominger) connection  $\nabla^B$ \cite{Yano, Bismut, Stro86}, which is the unique connection compatible with the Hermitian structure $(J,g)$ (i.e. $\nabla^B g=0 =\nabla^B J$) such that its  torsion $H$  is a totally skew symmetric tensor.  The torsion can therefore be  identified with a 3-form which is given by $H  = Jd \Omega$ or $H = d^c \Omega$ with $d^c=i(\overline\partial - \partial)$.  The Hermitian structure is called SKT if $H$ is $d$-closed i.e. $d H=0$.

Initially,  SKT metrics appeared in theoretical physics,  applied widely in 2-dimensional sigma models \cite{GHR, HP},  having also emerged from   superstring theories with torsion \cite{Stro86}. From a mathematical point of view,  they are deeply related to generalized K\"{a}hlerian geometry \cite{GHR,GM2, FP}.

Concerning the existence problem for SKT metrics on Hermitian manifolds, there are no general conditions.  All the results obtained in the literature indicate that the existence of an SKT metric depends on the type of manifold considered and its dimension.

For instance, in dimension four, any compact complex surface is always an SKT manifold, since each conformal class admits a unique SKT standard metric in the sense of Gauduchon \cite{G84}.  It is proven in \cite{MS}, that  on a  4-dimensional
 unimodular solvable Lie group   a  left-invariant  Hermitian  structure is  SKT and a classification of  4-dimensional SKT  Lie algebras is  also given.

However, for higher dimensions the existence of  a left-invariant  SKT   structure  on  unimodular  (non-compact) Lie groups is not guaranteed anymore,  unless certain selective conditions are required.

For example, by \cite{FPS}  only 4  isomorphism classes of  6-dimensional (non-abelian)  nilpotent  Lie algebras  admit an SKT structure,  see also \cite{Ug} for  a classification up to equivalence of complex structures.
An interesting property is that the existence of an SKT structure  $(J,g)$ on nilpotent Lie algebras of dimension 6 depends only on the complex structure $J$.

The 8-dimensional SKT nilpotent Lie algebras  are also classified in \cite{EFV}.  In a recent article \cite{AN}, it was shown that every  nilpotent Lie algebra  admitting a SKT structure is  at most 2-step nilpotent.


For the  more general class of  SKT solvable  Lie algebras, the only complete classification is in dimension four \cite{MS}.
Classifications of SKT solvable Lie algebras  of  six and higher dimensions are established  only under some extra conditions.  For example, in \cite{FOU} the classification of  six-dimensional SKT   unimodular solvable Lie algebras admitting complex structures  with non-zero closed  holomorphic $(3,0)$-form is given.   Moreover, a general  characterization of SKT structures on $2n$-dimensional almost abelian Lie algebras is established in \cite{AL}, and  a classification for the 6-dimensional case is given  in \cite{FP}.  The case of almost nilpotent solvmanifolds whose nilradical has one-dimensional commutator was considered in \cite{FP22}.  Also,  a  wide range of two-step solvable Lie groups admitting a left-invariant SKT structure  was  classified in \cite{FAS}, the same authors have recently obtained  the full classification of left-invariant SKT structures on two-step solvable Lie groups in dimension six,  \cite{FS22}.

In this article  we introduce a new  type of Hermitian structure,  called \emph{locally conformal SKT} (shortly LCSKT), as a generalization of  SKT structures, inspired by ideas of  locally conformal K\"{a}hler (LCK) geometry \cite{V1, V2, AU}  and we study the existence of this type of structure  on Lie groups  and their compact quotients by lattices.

Let  $H$ be  the Bismut  torsion $3$-form  of a Hermitian  manifold $(M, J, g)$.  In  Section \ref{Sec: general}, we  define  an   LCSKT Hermitian manifold   $(M, J, g)$ as a Hermitian manifold, such that the  3-form  $H$  satisfies   the condition  $d H=\a \wedge H$, where   $\a$  is a  non-zero $d$-closed   $1$-form.    Notice that  $\alpha$  is not necessarily unique and,  if  $\alpha =0,$ then  $(J, g)$ is SKT. In the case where the 3-form $H$ is non-degenerate, i.e.  $\ker \, H  \neq \{ 0 \}$,  we prove that the definition of an  LCSKT structure is equivalent to  the $d$-closeness of each local 3-form $(e^{-f_i} H)\vert_{U_i}$, such that  $\{U_i\}$ is an open cover in $M$ and   $\{ f_i \}$ a family of  smooth real functions $f_i:U_i \rightarrow \R$   (Propositions \ref{Prop: localforms-if} and  \ref{Prop: localform-ifonlyif}).

In  Section \ref{Sec: nilpotent}, we study the existence of  LCSKT structures $(J,g)$  on  six-dimensional  nilpotent Lie algebras.   In  particular we provide  a complete classification of nilpotent Lie algebras admitting an  LCSKT structure (Theorem \ref{Thm: nilpotent-main} and  Corollary \ref{Cor: nilpotent-main}), showing that up to isomorphism only one  admits a non-trivial LCSKT structure.  In contrast to the SKT case this nilpotent  Lie algebra  is $3$-step.  Moreover, we prove that  a 6-dimensional nilpotent  Lie algebra,  admitting an LCSKT structure and a balanced metric, has to be abelian.

 We devoted  Section \ref{Sec: almost-abelian} to the study of the existence of LCSKT structures on  almost abelian  Lie algebras, i.e. on solvable Lie algebras admitting an abelian ideal of codimension one, determining  a characterization of LCSKT almost abelian Lie algebras in  arbitrary dimension
 $(2n \geq 6)$. In particular, we  construct some explicit examples of unimodular  (non-nilpotent) almost abelian Lie algebras admitting  LCSKT  structures, which allow to give examples of $6$-dimensional LCSKT  solvmanifolds. Moreover, we prove that  on an almost abelian
 Lie  algebra  a Hermitian structure cannot be simultaneously LCSKT and balanced, unless it is K\"ahler.

\section{Locally conformal SKT structures}\label{Sec: general}

Let $(M,J,g)$ be a $2n$-dimensional Hermitian manifold, such that $J$ is a complex structure on $M,$ which is orthogonal relative to the Riemannian metric $g$. The 2-fundamental form $\Omega$ is given by $\Omega(X,Y)=g(JX,Y),$ for any vector fields  $X, Y$ on $M$. If $d \Omega =0$, the metric $g$ is said to be {\em K\"ahler} and the Levi-Civita connection $\nabla^g$ is Hermitian, i.e.  $J$ and $g$ are both parallel  with respect to $\nabla^g$.

In \cite{G97},  Gauduchon proved that  on  $(M, J, g)$ there exists an affine line $\{\nabla^t\}_{t \in \R}$ of canonical Hermitian connections, passing through the Chern connection $\nabla^C$ and the Bismut connection $\nabla^B$. The  connections $\nabla^t$ preserve both  $g$ and $J$ and they  are completely determined by their torsion. In particular, the Chern connection $\nabla^C$ is the unique Hermitian connection whose torsion has trivial $(1,1)$-component and the Bismut connection $\nabla^B$ is the unique Hermitian connection with totally skew-symmetric torsion $H$ (\cite{Bismut}).

 Since  $\nabla^B$ and $\nabla^C$ preserve the Hermitian structure $(J, g)$,  they induce unitary connections on  the anticanonical bundle $K^{-1}$,  with curvatures respectively  $i \rho^C$ and $i \rho^B$, where
\begin{equation*}
\rho^C(X,Y):= \frac{1}{2} \sum_{i=1}^{2n}g(R^C(X,Y)e_i,Je_i)
\end{equation*}
 is the Ricci form of  $\nabla^C$  and the Ricci form $\rho^B$  of $\nabla^B$  is defined in a similar way.  Here $\{e_i\}_{i=1}^{2n}$ is a local orthonormal frame of $TM$ and for the curvature we use the following convention: $$
 R^C(X,Y) Z = [\nabla^C_X,\nabla^C_Y ] Z -\nabla^C_{[X,Y]} Z.
 $$
The two Ricci forms $\rho^B$ and $\rho^C$ are related by the relation (see\cite[(2.7)]{AI})
\begin{equation}\label{Eq: Ricci-forms}
    \rho^C = \rho^B + (n-1)dJ\theta,
\end{equation}
where
$\theta$ is the  so-called {\em Lee form} $\theta$. The $1$-form  $\theta$    is
defined as the  trace of the torsion of the Chern connection  $\nabla^C$
$$
\theta = \frac{1}{(1-n)}Jd^{\dag}\Omega,
$$
where $d^{\dag}$ is the formal adjoint of $d$ with respect to $g$, or quivalently, it  is the unique $1$-form such that \begin{equation*}
d\Omega^{n-1} = (n-1)\theta \wedge \Omega ^{n-1}.
\end{equation*}

Concretely,  the  Bismut torsion $3$-form  has the  following expression
$$H(X,Y,Z)= d^c \Omega (X, Y, Z) = -d\Omega(JX,JY,JZ),$$
for every vector fields $X,Y,Z$ on $M$,  where $d^c=i(\overline \partial - \partial)$ is the real Dolbeault operator associated to the complex structure $J$.

\begin{remark} Note that $H$ is a real $3$-form of type   $(2,1) + (1,2)$ and  we can write it as
$$H = H^{(2,1)} + H^{(1,2)}, $$
where $H^{(2,1)}= - i \partial \Omega$ and  $H^{(1,2)} = \overline{H^{(2,1)}}$.
\end{remark}

If the 3-form $H$ is $d$-closed i.e. $dH = d d^c \Omega =0$,  the Hermitian  metric $g$  is  called \emph{strong K\"{a}hler with torsion} (SKT)  (or \emph{pluriclosed}).  Other types of Hermitian metrics can be defined in terms of the Lee form $\theta$.
A  Hermitian manifold  $(M,J,g)$ is called \emph{balanced} or \emph{co-symplectic},  respectively  \emph{locally conformal balanced} (LCB), if and only if the Lee form is vanishing ($\theta=0$),  respectively  closed.  Note that for $n=2$ a balanced metric is automatically K\"ahler.
 Moreover, if $d\Omega=\theta \wedge \Omega$ with closed Lee form $\theta$, the Hermitian manifold is said \emph{locally conformal K\"{a}hler} (LCK) \cite{V1}.

By imposing a similar relation  for the Bismut  torsion $3$-form, we  can introduce the following definition.

\begin{defin}\label{Def: LCSKT}
A Hermitian  structure  $(J,g)$  on a $2n$-dimensional  complex manifold $(M,J)$ with fundamental 2-form $\Omega$ is called a locally conformal SKT  (LCSKT for brevity)  structure if and only if there exists a $d$-closed  (non-zero) 1-form $\alpha$ on $M$ such that  $$dH=\alpha \wedge H,$$
where $H=d^c \Omega$.  A manifold $(M, J,g)$ is said to be LCSKT, if it admits an LCSKT structure.
\end{defin}

If $d H =0$ then $(J,g)$ is an SKT  structure  and we will call  it  a  \emph{trivial LCSKT structure}.

\begin{remark} Note that a Hermitian structure $(J,g)$ with $g$ conformal to an SKT metric $\tilde g$  is not automatically an LCSKT structure.
Indeed, if $\tilde \Omega$ is the fundamental form associated to $(J,  \tilde g)$ and $g = e^f  \tilde g$, then $\Omega = e^{f} \tilde \Omega$ and
$$
d^c \Omega  =   e^f  (J df ) \wedge \tilde \Omega + e^f  d^c  \tilde \Omega
$$
and then
$$
d d^c \Omega = df \wedge d^c \Omega + e^f d(Jdf) \wedge \tilde \Omega - e^f (J df) \wedge d \tilde \Omega.
$$
\end{remark}

\begin{remark}
If  $(J, g)$ is  a balanced Hermitian structure on a complex manifold $(M, J)$  of complex dimension  $n \geq 2$,   then by \cite[(2.13)]{AI} we have    $$<dd^c  \Omega,  \Omega^2> =  - 2 |d \Omega|^2.
$$
Therefore,  $g$   is SKT if and only if  it  is K\"ahler. If we impose the  LCSKT condition i.e. $d d^c \Omega = \alpha \wedge d^c \Omega$,  it is not  clear in general if   one   still  has  $d \Omega =0$, but we will see that for particular classes of examples the two conditions are complementary.
\end{remark}

\begin{prop}\label{Prop: localforms-if}
 Let $(M,J,g)$ be an LCSKT  manifold.  Then $M$ has an open cover $\{U_i\}$ and a family $\{f_i\}$ of smooth functions $f_i: U_i\rightarrow \R$  $(f_i \in \C^{\infty} (U_i))$ such that each local 3-form $(e^{-f_i} H)_{\vert U_i}$ is $d$-closed.
\end{prop}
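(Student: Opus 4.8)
The plan is to reduce the statement to a direct application of the Poincar\'e lemma to the closed $1$-form $\alpha$. By hypothesis, $(M,J,g)$ being LCSKT means $dH = \alpha \wedge H$ for some $d$-closed $1$-form $\alpha$. Since $d\alpha = 0$, I would first choose an open cover $\{U_i\}$ of $M$ consisting of simply connected (for instance, contractible coordinate balls) open sets; on each such $U_i$ the first de Rham cohomology vanishes, so there exists $f_i \in \C^{\infty}(U_i)$ with $df_i = \alpha\vert_{U_i}$. This fixes the cover and the family of functions claimed in the statement.

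Next I would carry out the elementary computation on each $U_i$. Using the Leibniz rule and $d(e^{-f_i}) = -e^{-f_i} df_i$, together with the LCSKT identity $dH = \alpha \wedge H$, we get
\begin{equation*}
d\bigl(e^{-f_i} H\bigr) = d(e^{-f_i}) \wedge H + e^{-f_i}\, dH = -e^{-f_i}\, df_i \wedge H + e^{-f_i}\, \alpha \wedge H.
\end{equation*}
Substituting $df_i = \alpha\vert_{U_i}$, the two terms cancel and $d\bigl(e^{-f_i} H\bigr)\vert_{U_i} = 0$, which is exactly the assertion.

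There is essentially no serious obstacle here: the content of the proposition is precisely that closedness of $\alpha$ lets one trivialize it locally as an exact form, and the factor $e^{-f_i}$ is the standard conformal weight that converts the twisted closedness $dH = \alpha \wedge H$ into honest closedness. The only point that deserves a word of care is the choice of cover — one must take the $U_i$ with trivial $H^1_{\mathrm{dR}}$ (equivalently, simply connected), which is always possible since any manifold is covered by coordinate balls; without that, a global primitive of $\alpha$ need not exist, and indeed the whole point of the ``locally conformal'' terminology is that the functions $f_i$ are only local. I would also remark that the converse direction (local closedness of the weighted forms implying the LCSKT condition, with the compatibility of the $df_i$ on overlaps forcing a globally defined closed $\alpha$) is the content of the companion Proposition~\ref{Prop: localform-ifonlyif} under the non-degeneracy hypothesis on $H$.
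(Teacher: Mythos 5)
Your proof is correct and follows essentially the same route as the paper's: apply the Poincar\'e lemma to the closed $1$-form $\alpha$ on a suitable cover to write $\alpha = df_i$ locally, then compute $d(e^{-f_i}H) = e^{-f_i}(dH - \alpha\wedge H) = 0$. The extra remarks on choosing the $U_i$ with trivial first de Rham cohomology are a reasonable elaboration of what the paper leaves implicit.
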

\begin{proof}
 Since $\a$ is closed then it is locally exact, i.e. for every $p \in M$ there is a neighborhood $U_i$ such that $\a =df_i$ for some function
$f_i: U_i \rightarrow \R$. Then, in $U_i$, we have
$$d(e^{-f_i} H)_{\vert U_i} = e^{-f_i} (dH - df_i \wedge H)=e^{-f_i }(dH- \a \wedge H)=0.$$
\end{proof}
The converse of the previous proposition is not always verified because of the eventual non-degeneracy  of $H$.  For this reason, we introduce the notion of  kernel $(\ker\omega)$ of a differential  form $\omega$ on $M$,
$$\ker\omega = \{X \in  \Gamma(TM) |\, \iota_X \omega =0\}$$
where  $\iota_X\omega$ is the interior product  of the differential form $\omega$ by the vector field $X$.  If $\ker\omega = 0$, then $\omega $ is said to be non-degenerate,  ortherwise $\omega$ is said to be degenerate.

\begin{prop}\label{Prop: localform-ifonlyif}
Let $(M,J,g)$ be a Hermitian manifold with fundamental 2-form $\Omega$ such that $H=d^c \Omega$ is non-degenerate. Suppose that  $M$ has an open cover $\{U_i\}$ and a family $\{f_i \}$ of smooth functions $f_i: U_i \rightarrow \R$ such that each local 3-form $(e^{-f_i} H)_{\vert U_i }$ is $d$-closed, then $(M,J,g)$ is an LCSKT manifold.
\end{prop}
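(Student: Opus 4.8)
The plan is to recover the global closed $1$-form $\alpha$ of Definition~\ref{Def: LCSKT} by gluing the local exact forms $df_i$. On each $U_i$ the hypothesis $d\bigl((e^{-f_i}H)_{\vert U_i}\bigr)=0$ expands to $e^{-f_i}(dH-df_i\wedge H)=0$, hence
\[
dH=df_i\wedge H \qquad\text{on }U_i .
\]
On a double overlap $U_i\cap U_j$ this gives $(df_i-df_j)\wedge H=0$. The crucial point is to conclude $df_i=df_j$ there, and this is exactly where the non-degeneracy of $H$ is used.

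So the heart of the matter is a pointwise algebraic lemma: \emph{if $V$ is a real vector space of even dimension $2n$, $H\in\Lambda^{3}V^{*}$ has $\ker H=\{0\}$, and $\beta\in V^{*}$ satisfies $\beta\wedge H=0$, then $\beta=0$.} To prove it, suppose $\beta\neq 0$ and complete $\beta=e^{1}$ to a basis $e^{1},\dots,e^{2n}$ of $V^{*}$, with dual basis $e_{1},\dots,e_{2n}$ of $V$. Writing $H=e^{1}\wedge A+B$ with $A\in\Lambda^{2}W$ and $B\in\Lambda^{3}W$, where $W=\langle e^{2},\dots,e^{2n}\rangle$, the equation $0=\beta\wedge H=e^{1}\wedge B$ forces $B=0$, so $H=e^{1}\wedge A=\beta\wedge A$. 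Now $A$, viewed as an alternating bilinear form on the $(2n-1)$-dimensional space $\langle e_{2},\dots,e_{2n}\rangle$, has even rank $\le 2n-2$, hence a non-trivial radical: there is $X\neq 0$ with $e^{1}(X)=0$ and $\iota_{X}A=0$. Then
\[
\iota_{X}H=\iota_{X}(\beta\wedge A)=\bigl(\iota_{X}\beta\bigr)A-\beta\wedge\iota_{X}A=0,
\]
contradicting $\ker H=\{0\}$. This proves the lemma.

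Applying the lemma fibrewise to $\beta=(df_i-df_j)_p$ for $p\in U_i\cap U_j$ (using that $H_p$ is non-degenerate and $\dim M=2n$), we get $df_i=df_j$ on $U_i\cap U_j$. Hence the local $1$-forms $df_i$ patch to a globally defined $1$-form $\alpha$ on $M$; it is closed since it is locally exact, and it satisfies $dH=\alpha\wedge H$ on each $U_i$, hence on all of $M$. If $\alpha\not\equiv 0$ this is precisely an LCSKT structure; if $\alpha\equiv 0$ then $dH=0$ and $(J,g)$ is SKT, i.e.\ a trivial LCSKT structure. In either case $(M,J,g)$ is LCSKT.

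The main obstacle is the algebraic lemma: it is the only step where non-degeneracy of $H$ — and the evenness of $\dim M$, entering through the parity of $2n-1$ — is genuinely needed, and without it the forms $df_i$ need not agree on overlaps, in line with the warning following Proposition~\ref{Prop: localforms-if}. The local computation of $d(e^{-f_i}H)$ and the gluing are then routine.
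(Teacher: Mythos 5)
Your proof is correct and follows the same route as the paper: expand $d\bigl((e^{-f_i}H)\vert_{U_i}\bigr)=0$ to get $dH=df_i\wedge H$ on each $U_i$, show the $df_i$ agree on overlaps using the non-degeneracy of $H$, and glue them into a global closed $1$-form $\alpha$ with $dH=\alpha\wedge H$. The only difference is that the paper simply asserts that $\ker H=0$ together with $(\alpha_i-\alpha_j)\wedge H=0$ forces $\alpha_i=\alpha_j$, whereas your pointwise algebraic lemma (writing $H=\beta\wedge A$ and producing a radical vector of $A$ from the odd dimension $2n-1$) correctly justifies that step and makes explicit where the evenness of $\dim M$ is used.
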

\begin{proof}
From the condition $d(e^{-f_i} H)\vert _{U_i}=0$, we have that, in $U_i$,
$dH= df_i \wedge H$. Then, for  every point $p\in M$ there is a neighborhood $U_i$ and a closed one-form $\a_i=df_i$ such that
$dH=\a_i \wedge H$. Let $U_i$ and $U_j$ be two such neighborhoods. Then on $U_i \cap U_j$, we get $(\a_i- \a_j) \wedge H=0$. Since $\ker H=0$, it follows that $\a_i=\a_j$ and so $\a$ is a globally defined closed one-form such that  $dH= \a \wedge H$. Thus, $M$ is LCSKT.
\end{proof}

We have adopted Definition \ref{Def: LCSKT} as the base definition of LCSKT  manifolds  rather than  the property stated in the Proposition \ref{Prop: localforms-if}, in order  to cover the general case, including degenerate 3-forms $H$.

\medskip

\begin{remark}  Note  that, given an    LCSKT manifold $(M, J, g, \Omega)$,  the Chern-Ricci flow
(see for instance \cite{TW})
$$
\left \{
\begin{array}{l}
\partial_t \Omega (t) = - \rho^C (\Omega (t)),\\[2pt]
\Omega (0) = \Omega,
\end{array}
\right.
$$
  preserves the LCSKT condition. Here $\rho^C (\Omega (t))$ denotes the Chern Ricci form of $\Omega(t)$.
Indeed,  the solution  $\Omega(t)$  of the previous  flow  is of the form
$\Omega(t)  = \Omega - t  \rho^C + i \partial \overline  \partial \varphi(t)$,
where  $\rho^C$ is the Chern  Ricci form of $\Omega$ and $\varphi(t)$ solves  the system $(2.1)$ in  \cite{TW}, and so   $d \Omega(t) =  d  \Omega$.

Since $J$  does not evolve along  the flow, we have that
$d^c \Omega (t) = d^c \Omega$.
Thus  $d d^c \Omega (t) = d d^c \Omega$ and the  LCSKT condition is preserved.
\end{remark}

In the paper we will study the existence of invariant LCSKT structures  on  compact  manifolds given by the quotients $\Gamma \backslash G$ of  simply connected Lie groups $G$ by co-compact discrete subgroups  $\Gamma$.

 By invariant LCSKT structure $(J, g)$ on $M = \Gamma \backslash G$  we mean a structure induced by a left-invariant one on $G$ or equivalently  by an LCSKT structure on its Lie algebra $\frak g$.     Therefore we  can    study the existence of invariant  LCSKT structures  on $M$  working at the level of the Lie algebra $\frak g$ of $G$. Furthermore, note that  the form $\alpha$ cannot be  exact (see the proof of Proposition 4.6 in \cite{OOS}).

We will now  shortly introduce the definition of SKT and LCSKT structures on Lie algebras.
Recall that an almost   complex structure  $J$ on $2n$-dimensional real Lie algebra $\frak g$ is defined as an endomorphism of $\frak g$ such that $J^2 = -\mathrm{Id}_{\frak g} $.  If $J$ is  integrable, i.e. if
$$N(X,Y) = [JX,JY] - J[JX,Y] - J[X,JY] - [X,Y] = 0,$$
 for any $X,Y \in \frak g$, then $J$ is called a complex structure on $\frak g$ and  the $i$-eigenspace  $\frak g^{1,0}$ of $J$ in $\frak g^{\C} :=  \frak g  \otimes_{\mathbb R} \frak g$ is a complex subalgebra of $\frak g^{\C}.$   If  $\frak g^{1,0}$ is  an abelian subalgebra  of $\frak g^{\C}$, or equivalently $[JX,JY]=[X,Y],$ for all $X,Y \in \frak g$, then  the  complex structure  $J$ is said to be {\em abelian}. When $\frak g^{1,0}$ is a complex ideal we say that $J$ is bi-invariant, i.e. $J[X,Y ] = [JX,Y ]$, for every $X, Y \in \frak g$.

A   Hermitian metric  $g$ on $(\frak g, J)$ is  a (positive definite) inner product    which is  $J$-orthogonal, i.e. such that $g(JX ,  JY) = g ( X, Y)$, for every $X, Y \in \frak g$.
Let  $\Omega(\cdot , \cdot):=g(J \cdot , \cdot)$   be the  associated fundamental form and $H = d^c \Omega$  be the torsion Bismut form.  Then the  Hermitian structure  $(J, g)$ is SKT  if $dH =0$. Moreover, $(J,g)$  is  LCSKT if there is a closed 1-form $\alpha$ on $\mathfrak{g}$ such that
 \[   dH = \a \wedge H.\]

\begin{remark}  By using that the compact quotient $M = \Gamma \backslash G$  admits a bi-invariant volume form and the invariance of $J$ one can show that  if $(M, J)$ admits a balanced (resp. SKT) metric, then $M$ admits  an invariant balanced (resp. SKT) metric defined by $\Omega_{inv}$ (see \cite{FG,Ug}).
This follows from the fact that given any covariant k-tensor field $T$ on $\Gamma \backslash  G = M$ one can define a  covariant  k-tensor $T_{inv}$  on $\frak g$ as
$$
T_{inv} (X_1, \ldots, X_k) = \int_{p \in M} T_p (X_1 \vert_p, \ldots, X_k \vert_p) \nu, \qquad \mbox{for } X_1, \cdots, X_k \in \frak g
$$
and where $X_i\vert_p $ denotes the value at $p\in M$ of the projection on $M$ of the left-invariant vector field on $G$ defined by $X_i$.

In the case of LCKT structures we can apply symmetrization under the assumption that the closed $1$-form $\alpha$ is invariant.  Concretely,
suppose that $M=\Gamma\backslash G$ is equipped with an invariant complex structure $J$.  Let $g$ be a $J$-Hermitian metric (not necessarily invariant) with fundamental 2-form $\Omega$. If $(J, g)$ is an LCSKT structure with \emph{invariant} closed 1-form $\alpha$ then $(J, g_{inv})$ is an invariant LCSKT structure.  Indeed,  using similar arguments as in \cite{FG, Ug}, we have
$$d(d^c(\Omega_{inv})) = d J d(\Omega_{inv}) = (d J d(\Omega))_{inv} = (d(d^c(\Omega)))_{inv} = (\alpha \wedge d^c(\Omega))_{inv} = \alpha \wedge d^c(\Omega)_{inv}$$
 and our claim follows.
 \end{remark}

\section{Invariant LCSKT structures on $6$-dimensional nilmanifolds}\label{Sec: nilpotent}

In this section we study the existence of invariant  LCSKT structures on  nilmanifolds, i.e. on     compact  quotients $M = \Gamma/G$, where $G$ is a connected and simply connected nilpotent  Lie group and $\Gamma$ is a lattice in $G,$ i.e. a discrete co-compact subgroup $\Gamma \subset G$.

We recall that a  Lie algebra $\frak g$  is {\em nilpotent}  if  its  lower central series $\{ \frak g^i\}$ terminates, i.e. if $\frak g^k = \{ 0 \}$,
for some $k \in {\mathbb N}$, where
$$
\frak g^0 = \frak g, \quad \frak g^j = [\frak g^{j - 1}, \frak g], \, j \geq 1.
$$
In his work \cite{S}, Salamon proves  that   the  existence of a complex structure  $J$  on a $2n$-dimensional nilpotent Lie algebra $\frak g$  is equivalent to the existence of  a basis of $(1,0)$-forms  $\{\omega^j \}_{j=1}^n$ satisfying the following complex structure equations
\begin{center}
$d\omega^1 = 0,$ and $d\omega^j \in \mathcal{I}(\omega^1,...,\omega^{j-1})$ for $j=2,...,n$,
\end{center}
where $\mathcal{I}(\omega^1,...,\omega^{j-1})$ is the ideal in  $\Lambda^* \frak g_{\mathbb{C}}^*$ generated by $\{\omega^1,...,\omega^{j-1} \}$.

A complex structure $J$ on a nilpotent Lie algebra $\frak g$ is called \emph{nilpotent}  in the sense  of \cite{CFGU00},  if there  exists  a basis $\{\omega^j\}_{j=1}^n$ of $(1,0)$-forms  satisfying
\begin{center}
 $d\omega^1=0$ and $d\omega^j \in \Lambda^2 \langle \omega^1,...,\omega^{j-1},\omega^{\overline 1},...,\omega^{\overline{j-1}}\rangle$ for  $j=2,..,n.$
 \end{center}
One can easily check that all abelian complex structures are necessarily nilpotent and that in this case  $d \omega^j$ are of type $(1,1)$.

Nilpotent Lie algebras of dimension $4$ and $6$  admitting a complex structure  have been classified in \cite{S}, with detailed list up to isomorphism in \cite[Theorem 8]{Ug} for the 6-dimensional case, see also \cite{COUV} for a  classification up to equivalence of the complex structures.
In particular,  Ugarte \cite[Proposition 2]{Ug}  showed
the  following   result.
\begin{prop}[\cite{Ug}]\label{Prop: Ugarte}
Let $J$ be a complex structure on a  $6$-dimensional nilpotent Lie algebra $\mathfrak{g}$.
\begin{enumerate}
 \item[(a)]  If $J$  is non-nilpotent, then there is a basis $\{\omega^j\}_{j=1}^{3}$ of $(1,0)$-forms such that
\begin{equation}\label{Eq: J non-nil}
\left \{  \begin{array}{lcl}
 d \omega^1 &=&0,\\[3pt]
 d \omega^2 &=& E\,  \omega^{13} + \omega^{1\overline 3},\\[3pt]
 d \omega^3 &=& A \,  \omega^{1\overline 1 }+ i b \, \omega^{1 \overline 2} - i b  \overline E \, \omega^{2 \overline 1},
 \end{array}
 \right.
\end{equation}
 where $A, E \in \C$ with $| E | = 1,$ and $b \in \R-\{0\}$.
\item[(b)] If $J$ is nilpotent, then there is a basis $\{\omega^j\}_{j=1}^{3}$ of $(1,0)$-forms   satisfying
\begin{equation}\label{Eq: J nil1}
    \left \{  \begin{array}{lcl}
d \omega^1 &=& 0,\\[3pt]
d \omega^2 &= &\epsilon \omega^{1 \overline 1},\\[3pt]
d \omega^3 &= & \rho \, \omega^{12} + (1 - \epsilon) A \, \omega^{1\overline 1} + B \,  \omega^{1 \overline 2}+ C \,  \omega^{2\overline 1} + (1 - \epsilon) D \,  \omega^{2 \overline 2},\\[2pt]
\end{array}
\right.
\end{equation}
where $A, B, C, D \in \C,$ and $\epsilon, \rho \in \{ 0, 1 \}$.
\end{enumerate}
\end{prop}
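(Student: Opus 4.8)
The plan is to follow Salamon's framework \cite{S}: first write the most general admissible structure equations in a Salamon basis, then impose the Jacobi identity in the form $d^{2}=0$, and finally use the remaining freedom in the choice of adapted $(1,0)$-basis to reach the displayed normal forms. By \cite{S} I may start from a basis $\{\omega^{1},\omega^{2},\omega^{3}\}$ of $(1,0)$-forms with $d\omega^{1}=0$, $d\omega^{2}\in\mathcal{I}(\omega^{1})$ and $d\omega^{3}\in\mathcal{I}(\omega^{1},\omega^{2})$, and since $J$ is integrable each $d\omega^{j}$ has no $(0,2)$-component. Listing all $(2,0)+(1,1)$ forms lying in these ideals gives
$$d\omega^{2}=a\,\omega^{12}+b\,\omega^{13}+A_{1}\,\omega^{1\overline 1}+A_{2}\,\omega^{1\overline 2}+A_{3}\,\omega^{1\overline 3},$$
$$d\omega^{3}=c\,\omega^{12}+e\,\omega^{13}+f\,\omega^{23}+\sum_{i=1}^{2}\sum_{j=1}^{3}B_{ij}\,\omega^{i\overline j},$$
with complex coefficients. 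The moves still available are the changes of $(1,0)$-basis that preserve the flag structure, i.e. the upper-triangular substitutions $\widetilde\omega^{1}=\mu_{11}\omega^{1}$, $\widetilde\omega^{2}=\mu_{21}\omega^{1}+\mu_{22}\omega^{2}$, $\widetilde\omega^{3}=\mu_{31}\omega^{1}+\mu_{32}\omega^{2}+\mu_{33}\omega^{3}$, together with re-orderings of the flag whenever it is not unique. I would first record how these act on the coefficients.

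Next I would split according to the nilpotency criterion of \cite{CFGU00}, that is, according to whether one can reach a basis with each $d\omega^{j}\in\Lambda^{2}\langle\omega^{1},\dots,\omega^{j-1},\omega^{\overline 1},\dots,\omega^{\overline{j-1}}\rangle$. Since a $(1,0)$-form cannot involve $\omega^{\overline 3}$, the terms of $d\omega^{2}$ carrying $\omega^{3}$ or $\omega^{\overline 3}$ are the obstruction to nilpotency; one checks that in the non-nilpotent case one may normalise $d\omega^{2}$ to $E\,\omega^{13}+\omega^{1\overline 3}$ with $E\ne 0$ and absorb the remaining terms into redefinitions of $\omega^{2},\omega^{3}$. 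Then expanding $0=d(d\omega^{2})=-E\,\omega^{1}\wedge d\omega^{3}-\omega^{1}\wedge d\omega^{\overline 3}$ and $0=d(d\omega^{3})$ kills the cross terms of $d\omega^{3}$, forces $|E|=1$, and pins $d\omega^{3}$ to the shape $A\,\omega^{1\overline 1}+ib\,\omega^{1\overline 2}-ib\overline E\,\omega^{2\overline 1}$ with $b\in\R$; finally $b\ne 0$, since $b=0$ would make $J$ nilpotent after re-ordering $\omega^{1},\omega^{3},\omega^{2}$. This yields \eqref{Eq: J non-nil}.

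In the nilpotent case I would instead choose the basis so that $d\omega^{2}\in\langle\omega^{1\overline 1}\rangle$ and $d\omega^{3}\in\Lambda^{2}\langle\omega^{1},\omega^{2},\omega^{\overline 1},\omega^{\overline 2}\rangle$, rescale $\omega^{2}$ so that the coefficient $\epsilon$ of $\omega^{1\overline 1}$ equals $0$ or $1$, and rescale $\omega^{3}$ so that the coefficient $\rho$ of $\omega^{12}$ equals $0$ or $1$. When $\epsilon=1$, replacing $\omega^{3}$ by $\omega^{3}$ plus a suitable multiple of $\omega^{2}$ removes the $\omega^{1\overline 1}$-coefficient of $d\omega^{3}$, while $0=d(d\omega^{3})$, computed from $d\omega^{2\overline 2}=\epsilon(\omega^{1\overline 1\overline 2}-\omega^{12\overline 1})$, removes its $\omega^{2\overline 2}$-coefficient; both effects are recorded by the factors $(1-\epsilon)$ in front of $A$ and $D$ in \eqref{Eq: J nil1}, the remaining constants $A,B,C,D$ being otherwise unconstrained. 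Since only the existence of one such adapted basis is needed, and not a classification up to equivalence, this concludes the argument.

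The hard part will be the bookkeeping rather than any single computation: making the nilpotent/non-nilpotent dichotomy precise and exhaustive at the level of coefficients, checking that the chain of admissible base changes does reach the normal forms in every subcase, and keeping track throughout of the reality-type constraints coming from the identities $\overline{\omega^{1\overline 1}}=-\omega^{1\overline 1}$ and $\overline{\omega^{2\overline 2}}=-\omega^{2\overline 2}$.
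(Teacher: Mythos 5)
First, a point of reference: the paper does not prove this proposition at all --- it is quoted verbatim from Ugarte \cite[Proposition 2]{Ug} --- so your attempt can only be measured against Ugarte's original argument, whose overall strategy (Salamon flag basis, $d^2=0$, upper-triangular base changes) you have correctly identified. Your treatment of the nilpotent case (b) is essentially sound: $d\omega^2\in\Lambda^2\langle\omega^1,\omega^{\overline 1}\rangle=\mathbb{C}\,\omega^{1\overline 1}$ is immediate from the definition of nilpotency, the rescalings putting $\epsilon,\rho\in\{0,1\}$ are harmless, the substitution $\omega^3\mapsto\omega^3+c\,\omega^2$ does kill the $\omega^{1\overline 1}$-term when $\epsilon=1$, and your computation $d(\omega^{2\overline 2})=\epsilon(\omega^{1\overline 1\overline 2}-\omega^{12\overline 1})$ correctly forces $D=0$ when $\epsilon=1$, with no further constraints since $d$ kills $\omega^{12},\omega^{1\overline 1},\omega^{1\overline 2},\omega^{2\overline 1}$.

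The genuine gap is in part (a), precisely at the sentence ``one checks that in the non-nilpotent case one may normalise $d\omega^{2}$ to $E\,\omega^{13}+\omega^{1\overline 3}$ with $E\ne 0$.'' This is not bookkeeping; it is the core of Ugarte's proof. Two things are being assumed without justification. First, nilpotency of $J$ is the existence of \emph{some} adapted basis with $d\omega^j\in\Lambda^2\langle\omega^1,\dots,\omega^{\overline{j-1}}\rangle$, so the presence of $\omega^3$ or $\omega^{\overline 3}$ in $d\omega^2$ in one particular Salamon basis is not an obstruction to nilpotency; to make the dichotomy exhaustive one needs the basis-independent characterization via the ascending series $\mathfrak{a}_k(J)$ of \cite{CFGU00} and an analysis of which flags $\omega^1$ can start. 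Second, and more concretely, your normal form requires \emph{both} the $\omega^{13}$- and the $\omega^{1\overline 3}$-coefficients of $d\omega^2$ to be nonzero (otherwise you cannot scale the latter to $1$, nor obtain $|E|=1$). If exactly one of them vanishes, $d^2\omega^2=0$ forces $d\omega^3\in\mathcal{I}(\omega^1)$ and one must then show --- by a separate argument, typically a reordering of the flag --- that $J$ is in fact nilpotent; these degenerate subcases are exactly where the case analysis lives, and they are not addressed. Once that normalization is granted, your remaining steps do work: comparing coefficients in $\omega^1\wedge(E\,d\omega^3+\overline{d\omega^3})=0$ kills the $(2,0)$-part and the $\omega^{1\overline 3},\omega^{2\overline 3}$-terms of $d\omega^3$ and gives $\overline{B}_{12}=EB_{21}$, and then $d^2\omega^3=0$ yields $B_{22}=0$, $B_{12}+E B_{21}=0$ (hence $B_{12}=ib$ purely imaginary) and $|E|=1$, with $b\ne 0$ being exactly the non-nilpotency condition as you say. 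So the skeleton is right, but as written the proof of (a) is incomplete at its decisive step.
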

The abelian complex structures  correspond to the case  $\rho=0$ in the complex  structure equations \eqref{Eq: J nil1}. A  bi-invariant complex structure is automatically nilpotent and corresponds to $\epsilon = A=B=C=D =0$  in \eqref{Eq: J nil1}.

The following Lemma provides a further reduction of the complex structure equations on 2-step  nilpotent Lie algebras (for $\epsilon=0$) (see Proposition 10 and  Lemma 11 in \cite{Ug}).
\begin{lemm}[\cite{Ug}]\label{Lem: epsilon=0}
Let $J$ be a complex structure on a 2-step nilpotent Lie algebra  $\frak g$ of dimension 6 with first Betti number $ b_1 (\frak g) \geq  4$. If $J$ is not bi-invariant, then there is a basis $\{\omega^j\}_{j=1}^3$ of  $(1,0)$-forms such that
\begin{equation}\label{Eq: J nil0}
\left\{  \begin{array}{l}
d \omega^1 = d\omega^2 =0,\\[3pt]
d \omega^3 = \rho \omega^{12} + \omega^{1\overline 1} + B\omega^{1 \overline 2} + D \omega^{2\overline 2},
\end{array}
\right.
\end{equation}
where $B, D\in \C,$ and $\rho \in \{0,1\}.$
\end{lemm}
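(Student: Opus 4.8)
The plan is to obtain the asserted equations by a sequence of linear changes of the $(1,0)$-coframe $\{\omega^1,\omega^2,\omega^3\}$, starting from the normal form of Proposition~\ref{Prop: Ugarte}(b) and first reducing to the case $\epsilon=0$ in \eqref{Eq: J nil1}. Since $\mathfrak{g}$ is $2$-step nilpotent, writing the equations of Proposition~\ref{Prop: Ugarte}(a) in a real coframe shows they force a non-zero iterated bracket $[X,[Y,Z]]$, so that case is excluded; the same kind of check applied to \eqref{Eq: J nil1} with $\epsilon=1$ shows that $2$-step nilpotency forces $\rho=B=C=0$, whence $d\omega^3=0$ and the relabelling $(\omega^1,\omega^2,\omega^3)\mapsto(\omega^1,\omega^3,\omega^2)$ already puts the equations into the claimed form (with $\rho=B=D=0$ and $\omega^{1\overline1}$-coefficient $1$). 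So from now on I may assume
\begin{equation*}
d\omega^1=d\omega^2=0,\qquad d\omega^3=\rho\,\omega^{12}+A\,\omega^{1\overline1}+B\,\omega^{1\overline2}+C\,\omega^{2\overline1}+D\,\omega^{2\overline2},
\end{equation*}
with $\rho\in\{0,1\}$ and $(A,B,C,D)\in\C^4$; moreover $(A,B,C,D)\neq(0,0,0,0)$, since $A=B=C=D=0$ is exactly the bi-invariant case, excluded by hypothesis.

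Since $J$ is not bi-invariant we have $d\omega^3\neq0$, so the space of closed $(1,0)$-forms is precisely $\langle\omega^1,\omega^2\rangle$; hence the coframe changes preserving the shape above are exactly $\tilde\omega^i=\sum_k P_{ik}\omega^k$ $(i=1,2)$ with $P=(P_{ik})\in GL(2,\C)$, together with $\tilde\omega^3=\mu_0\,\omega^3+(\text{closed }(1,0)\text{-form})$, $\mu_0\in\C^{*}$. Writing $\tilde\omega^1=a\,\omega^1+b\,\omega^2$ and $\tilde\omega^2=c\,\omega^1+d\,\omega^2$, a direct computation shows that the coefficient of $\tilde\omega^{1\overline1}$ in $d\tilde\omega^3$ equals $\mu_0\,|\det P|^{-2}\,q(c,d)$, where
\begin{equation*}
q(c,d):=A\,|d|^2-B\,d\,\overline c-C\,c\,\overline d+D\,|c|^2 ,
\end{equation*}
and that the requirement ``coefficient of $\tilde\omega^{2\overline1}$ is zero'' is the single linear equation $a\,(C\overline d-D\overline c)=b\,(A\overline d-B\overline c)$ in $(a,b)$. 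The key observation is that this equation, together with the prescription $ad-bc=\det P$, forms a $2\times2$ linear system in $(a,b)$ whose determinant is again exactly $q(c,d)$. Now, because $(A,B,C,D)\neq0$ one checks (evaluating at $(c,d)=(0,1),(1,0),(1,1),(1,i)$) that $q\not\equiv0$; I would then fix such a $(c,d)$, choose $\mu_0$ and $\det P$ so that $\mu_0\,|\det P|^{-2}q(c,d)=1$, and solve the above system uniquely for $(a,b)$ (its solution is invertible because $\det P\neq0$). This yields a coframe with $d\omega^1=d\omega^2=0$ and $d\omega^3=\rho'\,\omega^{12}+\omega^{1\overline1}+B'\,\omega^{1\overline2}+D'\,\omega^{2\overline2}$ for some $\rho',B',D'\in\C$; a final rescaling $\omega^2\mapsto\rho'\,\omega^2$ when $\rho'\neq0$ normalizes $\rho'$ to $1$ without disturbing the (already vanishing) $\omega^{2\overline1}$-coefficient nor the $\omega^{1\overline1}$-coefficient, and only renames $B'$ and $D'$. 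This gives the equations of the statement, with $\rho\in\{0,1\}$ and $B,D\in\C$.

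The step requiring care is this last one: recognizing that the \emph{same} quantity $q(c,d)$ controls both the value of the $\omega^{1\overline1}$-coefficient and the solvability of the linear system that kills the $\omega^{2\overline1}$-term, and that $q\not\equiv0$ is equivalent to the non-bi-invariance of $J$ (this is also where the hypotheses $2$-step and $b_1(\mathfrak{g})\geq4$ are used, to guarantee that we really are in the case $d\omega^1=d\omega^2=0$). Once this is in place, verifying that the successive rescalings are compatible and keep $\rho\in\{0,1\}$ is routine.
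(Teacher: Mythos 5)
The paper offers no proof of this lemma --- it is quoted from \cite{Ug} (Proposition 10 and Lemma 11 there) --- so your argument cannot be compared with an internal one; judged on its own, it is essentially correct and self-contained. The core of your reduction checks out: writing $\tilde\omega^1=a\omega^1+b\omega^2$, $\tilde\omega^2=c\omega^1+d\omega^2$, $\tilde\omega^3=\mu_0\omega^3$, the coefficient of $\tilde\omega^{1\overline1}$ in $d\tilde\omega^3$ is indeed $\mu_0|\det P|^{-2}q(c,d)$ with $q(c,d)=A|d|^2-Bd\overline c-Cc\overline d+D|c|^2$, the vanishing of the $\tilde\omega^{2\overline1}$-coefficient is the linear equation $a(C\overline d-D\overline c)=b(A\overline d-B\overline c)$, and the determinant of that equation paired with $ad-bc=\det P$ is again $q(c,d)$; moreover $q\equiv0$ exactly when $A=B=C=D=0$, i.e.\ exactly in the bi-invariant case. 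The final rescaling of $\omega^2$ to normalize $\rho'$ leaves the $\omega^{1\overline1}$- and $\omega^{2\overline1}$-coefficients untouched, as you say. This is a clean, more explicit route than simply invoking Ugarte.

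One sub-claim in your preliminary reduction is wrong as stated: in the case $\epsilon=1$ of \eqref{Eq: J nil1}, two-step nilpotency alone does \emph{not} force $\rho=B=C=0$. For example, $d\omega^2=\omega^{1\overline1}$, $d\omega^3=\omega^{12}+\omega^{1\overline2}=\omega^1\wedge(\omega^2+\overline\omega^2)$ defines a two-step nilpotent Lie algebra (since $\omega^1,\overline\omega^1,\omega^2+\overline\omega^2$ are all closed, $d\omega^3$ lies in $\Lambda^2$ of the annihilator of the commutator), yet $(\rho,B,C)\neq(0,0,0)$; in general two-step only gives $C=0$ and $\rho=B$. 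What actually excludes this configuration is the hypothesis $b_1(\frak g)\geq4$: when $\epsilon=1$ and $(\rho,B,C)\neq(0,0,0)$ the space of closed real $1$-forms is spanned by $\mathrm{Re}\,\omega^1,\mathrm{Im}\,\omega^1,\mathrm{Re}\,\omega^2$, so $b_1=3$. You do acknowledge at the end that both hypotheses are needed to land in the case $d\omega^1=d\omega^2=0$, but the explicit deduction in the middle should be rephrased to invoke $b_1\geq4$ rather than two-step nilpotency. With that correction the proof is complete.
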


\begin{remark}
The Lie algebras in Lemma \ref{Lem: epsilon=0} are those for which $J$ is nilpotent with $\epsilon = 0$
 and at least one of $A,B,C,D$ not zero in structure equations (\ref{Eq: J nil1}). \end{remark}

Henceforth we will be assuming that $J$ is not a bi-invariant complex structure on $\frak g$. The bi-invariant case will be dealt with in Remark \ref{rem: J-biinv}.

Using  Proposition \ref{Prop: Ugarte}  we   can suppose, without further restrictions,  that a  $6$-dimensional  nilpotent Lie algebra  $\frak g$  admitting a complex structure $J$ has complex structure equations \eqref{Eq: J non-nil}  if $J$  is non-nilpotent  and  \eqref{Eq: J nil1}  if   $J$ is nilpotent (with reduction \eqref{Eq: J nil0} if $\epsilon =0$).

A $1$-form $\a$ on  $\frak g$  can be written as
\begin{equation}\label{espralpha} \a = \a^{(1,0)} + \a^{(0,1)} = \sum_{j=1}^3 \lambda_j \omega^j + \sum_{j=1}^3 \overline \lambda_j \overline \omega^j,
\end{equation}
where  $\lambda_j \in \C$ and $\{\omega^j\}_{j=1}^3$ is the  basis of $(1.0)$-forms  satisfying the complex structure equations (\ref{Eq: J non-nil}) (resp. (\ref{Eq: J nil1}))  for $J$ non-nilpotent (resp.  $J$ nilpotent).

\begin{lemm}
Let $\a$ be a closed $1$-form on a 6-dimensional  nilpotent Lie algebra   $\frak g$  with complex structure $J$. If we write $\alpha$   with respect to the basis of $(1,0)$-forms as in  \eqref{espralpha}, then  we  get the following constraints on the coefficients $\lambda_j$:

\noindent (a) for $J$ non-nilpotent
\begin{equation}\label{Eq: a1}
    \lambda_2 = 0
\end{equation}
\begin{equation}\label{Eq: a2}
    \lambda_3 - \overline \lambda_3 E = 0
\end{equation}
\begin{equation}\label{Eq: a3}
\lambda_3 A - \overline \lambda_3 \overline A = 0
\end{equation}
(b) for $J$ nilpotent ($\epsilon = 0$)
\begin{equation}\label{Eq: b1}
    \rho \lambda_3 = 0
\end{equation}
\begin{equation}\label{Eq: b2}
    \lambda_3 = \overline \lambda_3
\end{equation}
\begin{equation}\label{Eq: b3}
  \lambda_3  {\rm Im} D= 0
\end{equation}
\begin{equation}\label{Eq: b4}
     \lambda_3 B = 0
\end{equation}
and for  $J$ nilpotent  ($\epsilon = 1$)
\begin{equation}\label{Eq: c1}
    \rho  \, \lambda_3 = 0
\end{equation}
\begin{equation}\label{Eq: c2}
    \lambda_2 = \overline \lambda_2
\end{equation}
\begin{equation}\label{Eq: c3}
\lambda_3 B - \overline \lambda_3 \overline C = 0
\end{equation}
\end{lemm}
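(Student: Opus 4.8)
The plan is to compute $d\alpha$ directly in each of the three cases and extract the vanishing conditions. Writing $\alpha = \sum_j \lambda_j \omega^j + \sum_j \overline\lambda_j \overline\omega^j$ and using $d$ on $1$-forms, we have $d\alpha = \sum_j \lambda_j\, d\omega^j + \sum_j \overline\lambda_j\, \overline{d\omega^j}$, since $d$ is a real operator. In each case the structure equations \eqref{Eq: J non-nil}, \eqref{Eq: J nil1} (or its reduction \eqref{Eq: J nil0}) give explicit expressions for $d\omega^j$ in terms of the wedge products $\omega^{ik}$, $\omega^{i\overline k}$, $\omega^{\overline i\overline k}$, which form part of a basis of $\Lambda^2\frak g^*_{\C}$. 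So the condition $d\alpha = 0$ becomes the requirement that each coefficient of this basis vanish, and I would simply read off those linear (in $\lambda_j$, $\overline\lambda_j$) constraints.

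More concretely, for $J$ non-nilpotent, $d\omega^1 = 0$ contributes nothing; $\lambda_2\, d\omega^2 = \lambda_2(E\,\omega^{13} + \omega^{1\overline 3})$ and its conjugate $\overline\lambda_2(\overline E\,\omega^{\overline 1\overline 3} + \omega^{\overline 1 3})$; $\lambda_3\, d\omega^3 = \lambda_3(A\,\omega^{1\overline 1} + ib\,\omega^{1\overline 2} - ib\overline E\,\omega^{2\overline 1})$ and its conjugate. Collecting coefficients: the term $\omega^{13}$ forces $\lambda_2 E = 0$, hence $\lambda_2 = 0$ since $|E|=1$, giving \eqref{Eq: a1}; the $\omega^{1\overline 2}$ coefficient is $ib\lambda_3 - \overline\lambda_3\cdot(\text{conjugate contribution})$ — here one must track that $\overline\lambda_3\,\overline{d\omega^3}$ produces $-\overline{\lambda_3}(-ib)\,\omega^{\overline 2 1} = $ a multiple of $\omega^{1\overline 2}$ too, and combining them (using $b\neq 0$) yields \eqref{Eq: a2}, $\lambda_3 = \overline\lambda_3 E$; finally the $\omega^{1\overline 1}$ coefficient gives $\lambda_3 A + \overline\lambda_3\overline A\cdot(\pm 1) = 0$, which after matching signs produces \eqref{Eq: a3}, $\lambda_3 A - \overline\lambda_3\overline A = 0$. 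The nilpotent cases $\epsilon = 0$ and $\epsilon = 1$ are handled identically: for $\epsilon = 0$, $d\omega^1 = d\omega^2 = 0$ so only $\lambda_3\, d\omega^3$ (with $d\omega^3 = \rho\omega^{12} + \omega^{1\overline 1} + B\omega^{1\overline 2} + D\omega^{2\overline 2}$) and its conjugate matter, yielding from the $\omega^{12}$, $\omega^{1\overline 1}$, $\omega^{2\overline 2}$, $\omega^{1\overline 2}$ coefficients the four conditions \eqref{Eq: b1}--\eqref{Eq: b4} respectively; for $\epsilon = 1$, $d\omega^2 = \omega^{1\overline 1}$ contributes $\lambda_2\,\omega^{1\overline 1}$ plus the conjugate $\overline\lambda_2\,\omega^{1\overline 1}$, whose sum must cancel (or rather, combine with the $\omega^{1\overline 1}$ part coming from $(1-\epsilon)A = 0$ — which vanishes here) forcing $\lambda_2 + \overline\lambda_2$-type relations; tracking the real/imaginary split carefully gives \eqref{Eq: c2}, while the $\omega^{12}$ and the mixed $\omega^{1\overline 2}$/$\omega^{2\overline 1}$ coefficients of $\lambda_3 d\omega^3 + \overline\lambda_3\overline{d\omega^3}$ give \eqref{Eq: c1} and \eqref{Eq: c3}.

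The only real subtlety — and the step most prone to sign errors — is correctly computing the conjugate contributions $\overline\lambda_j\,\overline{d\omega^j}$ and recognizing when a basis $2$-form appears with both orderings (e.g. $\omega^{2\overline 1}$ versus $\omega^{1\overline 2} = \overline{\omega^{2\overline 1}}$ up to conjugation, not as the same element), so that coefficients genuinely combine rather than being independent. I would be careful to fix once and for all the convention $\omega^{ij} := \omega^i\wedge\omega^j$, $\omega^{i\overline j} := \omega^i\wedge\overline\omega^j$, and to note that $\overline{\omega^{i\overline j}} = \overline\omega^i\wedge\omega^j = -\omega^{j\overline i}$, so that reality of $d\alpha$ couples the coefficient of $\omega^{1\overline 2}$ with that of $\omega^{2\overline 1}$. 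Once the bookkeeping is set up, each case is a short finite computation and the listed constraints \eqref{Eq: a1}--\eqref{Eq: c3} follow immediately; I do not anticipate any conceptual obstacle beyond this.
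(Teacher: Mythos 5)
Your proposal is correct and follows exactly the route the paper takes: the paper's own proof consists of the single remark that the constraints follow ``by direct computation'' from $d\alpha=0$ using the structure equations \eqref{Eq: J non-nil}, \eqref{Eq: J nil1} and \eqref{Eq: J nil0}, which is precisely the coefficient-by-coefficient bookkeeping you set up. Your identification of which basis $2$-form yields which constraint (and the care with the conjugate terms such as $\overline{\omega^{1\overline 1}}=-\omega^{1\overline 1}$) checks out in all three cases.
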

\begin{proof} The constraints imposed by the condition $d\alpha =0$ are given by direct computation, using equations \eqref{Eq: J non-nil} in the non-nilpotent case, \eqref{Eq: J nil1} in the nilpotent case with $\epsilon =1$ and \eqref{Eq: J nil0}  in the nilpotent case with $\epsilon =0$.
\end{proof}
To investigate the existence of  $J$-Hermitian metrics satisfying the LCSKT condition  on  $(\frak g, J)$  we can use the fact that  the generic $J$-Hermitian metric $g$ on  $\frak g$ is expressed, in terms of the basis $\{ \omega^j \}$, as
 $$
 \begin{array}{lcl}
 g &=& r (\omega^1 \otimes \overline \omega^1 + \overline \omega^1 \otimes \omega^1) +  s (\omega^2 \otimes \overline \omega^2 + \overline \omega^2 \otimes \omega^2) + t  (\omega^3 \otimes \overline \omega^3 + \overline \omega^3 \otimes \omega^3) \\[3pt]
 &&- i u  (\omega^1 \otimes \overline \omega^2 + \overline \omega^2 \otimes \omega^1)  + i  \overline u (\omega^2 \otimes \overline \omega^1 + \overline \omega^1 \otimes \omega^2)\\[3pt]
&& - i v (\omega^2 \otimes \overline \omega^3 + \overline \omega^3 \otimes \omega^2)  + i \overline v (\omega^3 \otimes \overline \omega^2 + \overline \omega^2 \otimes \omega^3)\\[3pt]
&& - i z  (\omega^1 \otimes \overline \omega^3 + \overline \omega^3 \otimes \omega^1)  + i  \overline z (\omega^3 \otimes \overline \omega^1 + \overline \omega^1 \otimes \omega^3)
\end{array}
$$
where $r, s, t  \in  \R$ and $u, v, z  \in  \C$;  $r,s,t > 0, rs-|u|^2> 0, st-|v|^2 >0 , rt-|z|^2 > 0$,
$rst + 2 Re(i \overline u \overline v  z)- t |u|^2 - r |v|^2 - s|z|^2 > 0.$
These last conditions guarantee that the metric $g$ is positive definite, i.e., $g(Z,\overline Z)> 0$ for any nonzero $Z \in \frak g^{\C}$.\\
Furthermore, the fundamental form $\Omega$ of the  generic Hermitian structure $(J, g)$ is then given by
\begin{equation}\label{fundformgen}
\Omega =  i(r \omega^{1 \overline1}  + s  \omega^{2 \overline 2} + t \omega^{3 \overline 3}) + u \omega^{1 \overline 2} -  \overline u \omega^{2 \overline 1}
 + v \omega^{2 \overline 3} - \overline v   \omega^{3 \overline 2}+ z  \omega^{1 \overline 3}- \overline z  \omega^{3 \overline 1}
\end{equation}
For the 3-form $H=d^c\Omega$, we will be using the notation
\begin{equation*}
    H=H^{(2,1)}+H^{(1,2)} =\sum_{l<m, n=1}^{3} \left (H_{l m\overline n} \, \omega ^{lm \overline n}+ H_{n \overline l \overline m} \, \omega^{n \overline l \overline m}\right)\\
    = (-i\partial \Omega) + (i\overline \partial \Omega).
\end{equation*}
Notice also that $dH = dd^c\Omega = 2i \partial \overline \partial \Omega$.

The following result is proved by a direct calculation, so we omit the proof.
\begin{lemm}
Let $\Omega$ as in \eqref{fundformgen}. The $(2,1)-$part  $H^{(2,1)}$ of the 3-form  $H=d^c\Omega$ and its derivative $dH$ are given  respectively by:

(a) for $J$ non-nilpotent
$$
\begin{array}{cc}
H^{(2,1)}= i(\overline A v +ibz) \omega^{12 \overline 1}-bEv\omega^{12\overline 2}+i(i \overline At-u+E\overline u)\omega^{13\overline 1}-i(is+bt)E\omega^{13\overline 2}\\
-iEv\omega^{13\overline 3}-i(is-bt)\omega^{23\overline 1}
\end{array}
$$
\begin{equation}\label{dH0}
    dH  = -4 (b^2 t \omega ^{12\overline 1 \overline 2} +s\omega ^{13\overline 1 \overline 3})
\end{equation}
where $\{\omega^j\}_{j=1}^3$ is the basis of  $(1,0)$-forms satisfying \eqref{Eq: J non-nil}.\\
(b) For $J$ nilpotent $(\epsilon =0)$
$$
H^{(2,1)}=  i( \rho \overline z +v - \overline B z) \omega^{12 \overline 1}+i(\rho\overline v -\overline D z)\omega^{12\overline 2}+\rho t \omega^{12\overline 3}-t\omega^{13\overline 1}
-\overline B t \omega^{23\overline 1}-\overline D t\omega^{23\overline 2},
$$
\begin{equation}\label{dH1}
dH = -2t [\rho + |B|^2 -2 Re(D)]\omega^{12\overline 1 \overline2}
\end{equation}
where $\{\omega^j\}_{j=1}^3$ is the basis of $(1,0)$-forms satisfying (\ref{Eq: J nil0}).\\
For $J$ nilpotent $(\epsilon =1) $
$$
H^{(2,1)}=  i(is + \rho \overline z -\overline B z) \omega^{12 \overline 1}+i(\rho\overline v +\overline C v)\omega^{12\overline 2}+\rho t \omega^{12\overline 3}-i\overline v\omega^{13\overline 1}\\
-\overline C t \omega^{13\overline 2}-\overline B t\omega^{23\overline 1},
$$
\begin{equation}\label{dH2}
dH =  -2t [\rho + |B|^2 + |C|^2]\omega^{12\overline 1 \overline2}
\end{equation}
where $\{\omega^j\}_{j=1}^3$ is the basis of  $(1,0)$-forms  satisfying (\ref{Eq: J nil1}).
\end{lemm}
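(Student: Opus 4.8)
The plan is to compute everything directly from the complex structure equations, using the two identities recorded above, namely $H^{(2,1)} = -i\,\partial\Omega$ and $dH = dd^c\Omega = 2i\,\partial\overline\partial\Omega$. Thus the lemma reduces to evaluating $\partial\Omega$, and then $\partial\overline\partial\Omega$, on the generic fundamental form \eqref{fundformgen}, separately in the three cases: \eqref{Eq: J non-nil} when $J$ is non-nilpotent, \eqref{Eq: J nil1} with $\epsilon = 1$, and \eqref{Eq: J nil0} when $J$ is nilpotent with $\epsilon = 0$.

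First I would split each of $d\omega^1, d\omega^2, d\omega^3$ into its bidegree components; since $J$ is integrable there is no $(0,2)$-part, so the $(2,0)$-part is $\partial\omega^j$ and the $(1,1)$-part is $\overline\partial\omega^j$, while the conjugates satisfy $\partial\overline\omega^j = \overline{\overline\partial\omega^j}$ and $\overline\partial\overline\omega^j = \overline{\partial\omega^j}$. For instance, in the non-nilpotent case one reads off $\partial\omega^1 = \overline\partial\omega^1 = 0$, $\partial\omega^2 = E\,\omega^{13}$, $\overline\partial\omega^2 = \omega^{1\overline 3}$, $\partial\omega^3 = 0$ and $\overline\partial\omega^3 = A\,\omega^{1\overline 1} + ib\,\omega^{1\overline 2} - ib\overline E\,\omega^{2\overline 1}$, whereas in the two nilpotent cases the only $(2,0)$-contribution is $\rho\,\omega^{12}$ inside $d\omega^3$. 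Then one applies $\partial$ termwise to \eqref{fundformgen} via the graded Leibniz rule $\partial(\omega^i\wedge\overline\omega^j) = \partial\omega^i\wedge\overline\omega^j - \omega^i\wedge\partial\overline\omega^j$, reorders the resulting monomials into the standard basis $\{\omega^{lm\overline n}\}_{l<m}$ of $(2,1)$-forms, and multiplies by $-i$ to read off $H^{(2,1)}$, which should reproduce the three displayed formulas. Finally, differentiating once more through $dH = 2i\,\partial\overline\partial\Omega$ (equivalently, applying $d$ to $H^{(2,1)} + H^{(1,2)}$) and collecting terms yields $dH$: in the non-nilpotent case the coefficients consolidate after using $|E|^2 = E\overline E = 1$, giving \eqref{dH0}; in the two nilpotent cases all mixed terms cancel and only a multiple of $\omega^{12\overline 1\overline 2}$ survives, with coefficient $-2t(\rho + |B|^2 - 2\,\mathrm{Re}(D))$ for $\epsilon = 0$ and $-2t(\rho + |B|^2 + |C|^2)$ for $\epsilon = 1$.

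The calculation is elementary but long, so the only real obstacle is bookkeeping: keeping signs straight when permuting wedge products of the $\omega^j$ and $\overline\omega^j$, reading off the $(2,0)$/$(1,1)$ split of each $d\omega^j$ correctly, and checking that the numerous terms carrying the off-diagonal metric parameters $u,v,z$ cancel so that $dH$ collapses to the short forms \eqref{dH0}, \eqref{dH1}, \eqref{dH2}. Two cheap consistency checks help along the way: $H^{(1,2)}$ must equal $\overline{H^{(2,1)}}$, and $dH$ must be real and of pure type $(2,2)$, so the a priori $(3,1)$-component $\partial H^{(2,1)}$ has to vanish identically — a fact that also follows abstractly from $dH = dd^c\Omega = 2i\,\partial\overline\partial\Omega$.
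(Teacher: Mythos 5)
Your proposal is correct and follows exactly the route the paper intends: the paper explicitly omits the proof as ``a direct calculation,'' and your plan --- extracting the $(2,0)$/$(1,1)$ components of each $d\omega^j$ from the structure equations, applying the graded Leibniz rule to $\Omega$ to get $H^{(2,1)}=-i\partial\Omega$, and then computing $dH=2i\partial\overline\partial\Omega$ --- is that calculation, with the right bidegree splits and sign conventions in all three cases. The consistency checks you mention (reality of $H$, purity of type $(2,2)$ for $dH$) are sensible safeguards but add nothing beyond the computation itself.
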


We will now classify 6-dimensional nilpotent Lie algebras $(\frak g,J)$, for which there exists a non-vanishing closed 1-form $(\a \neq 0)$ such that $dH=\a \wedge H$, where $H = d^c\Omega$ and $\Omega$ is given by (\ref{fundformgen}).   The case $\alpha =0$ was already studied in \cite{FPS}.
\begin{teo}\label{Thm: nilpotent-main}
Let $(\frak g,J)$ be a 6-dimensional nilpotent Lie algebra admitting a complex structure $J$. Then

\noindent  (a) If $J$ is non-nilpotent, then $(\frak g, J)$ does not admit any  LCSKT structure.

\noindent (b)  If $J$ is nilpotent, then $(\frak g, J)$ admits an LCSKT structure if and only if
it has either  complex structure equations
\begin{equation}\label{Eq: thm-nil-triv}
    \left\{  \begin{array}{l}
d \omega^j =0,  \quad j=1,2, \\ [3pt]
d \omega^3 = \omega^{1\overline 1},
\end{array}
\right.
\end{equation}
or
\begin{equation}\label{Eq: thm-nil-nontriv}
\left\{  \begin{array}{l}
d \omega^1 =0,\\
d \omega^2 = \omega^{1\overline 1},\\
d \omega^3 = \omega^{12} - \omega^{1 \overline 2}
\end{array}
\right.
\end{equation}
Moreover,    if $(\frak g, J)$ has  complex structure equations  \eqref{Eq: thm-nil-triv} and \eqref{Eq: thm-nil-nontriv}, every Hermitian structure is LCSKT.  In the  first case  every   LCSKT structure is trivial since $d H = 0$ and  the $1$-form $\a$ is independent of the parameters defining the Hermitian structure in (\ref{fundformgen})
 \begin{equation*}
    \a = 2Re(\lambda_1\omega^1),\quad \lambda_1 \in \C-\{0\}.
 \end{equation*}
In the second case every  LCSKT structure  is   non-trivial and  the $1$-form $\alpha$ is given by
\begin{equation*}
    \a = \frac{2it\overline v}{ts - |v|^2}\omega^1 -\frac{2itv}{ts - |v|^2} \overline \omega^1 - \frac{4t^2}{ts - |v|^2} Re(\omega^2)
\end{equation*}
and  by \cite{FPS} the nilpotent  Lie algebra with   complex structure equations \eqref{Eq: thm-nil-nontriv} does not admit any SKT structure.
\end{teo}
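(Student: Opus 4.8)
The plan is to run through the three normal forms of Proposition \ref{Prop: Ugarte} (the bi-invariant case being excluded here and treated in Remark \ref{rem: J-biinv}), using the explicit expressions for $H^{(2,1)}$ and $dH$ and the constraints on the coefficients of $\alpha$ recorded in the preceding lemmas. The key preliminary remark is a type decomposition of the LCSKT equation: since $dH = 2i\partial\overline\partial\Omega$ is of pure type $(2,2)$ while
\[
\alpha\wedge H = \alpha^{(1,0)}\wedge H^{(2,1)} + \bigl(\alpha^{(1,0)}\wedge H^{(1,2)} + \alpha^{(0,1)}\wedge H^{(2,1)}\bigr) + \alpha^{(0,1)}\wedge H^{(1,2)}
\]
has components of types $(3,1),(2,2),(1,3)$, the equation $dH=\alpha\wedge H$ is equivalent to
\[
\alpha^{(1,0)}\wedge H^{(2,1)} = 0 \qquad\text{and}\qquad 2\,\mathrm{Re}\bigl(\alpha^{(0,1)}\wedge H^{(2,1)}\bigr) = dH ,
\]
the $(1,3)$-identity being the conjugate of the first. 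Writing $\alpha$ as in \eqref{espralpha} and $\Omega$ as the generic form \eqref{fundformgen}, the first (type $(3,1)$) identity becomes a small linear system in $\lambda_1,\lambda_2,\lambda_3$, while the second (type $(2,2)$) identity is read off componentwise in the $(2,2)$-basis $\{\omega^{12\overline1\overline2},\omega^{12\overline1\overline3},\omega^{13\overline1\overline2},\omega^{12\overline2\overline3},\omega^{23\overline1\overline2},\dots\}$, keeping in mind that $dH$ in all families only involves $\omega^{12\overline1\overline2}$ (and, for $J$ non-nilpotent, also $\omega^{13\overline1\overline3}$).

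For part (a), $J$ non-nilpotent: by \eqref{dH0}, $dH = -4(b^2t\,\omega^{12\overline1\overline2}+s\,\omega^{13\overline1\overline3})\neq0$ since $s,t>0$ and $b\neq0$, and $\lambda_2=0$ by \eqref{Eq: a1}. If $v\neq0$, the $(3,1)$-identity has $\omega^{123\overline2}$-component $\lambda_3 bEv$, which forces $\lambda_3=0$ (as $b\neq0$, $|E|=1$), and then its $\omega^{123\overline1}$-component gives $\lambda_1(is-bt)=0$, hence $\lambda_1=0$ since $is-bt\neq0$; so $\alpha=0$, a contradiction. If $v=0$, then $H^{(2,1)}$ has no $\omega^{12\overline2}$-term and $\alpha$ has no $\overline\omega^2$-term, so the $(2,2)$-part of $\alpha\wedge H$ has vanishing $\omega^{12\overline1\overline2}$-coefficient, which is incompatible with $-4b^2t\neq0$ in $dH$. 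Hence $(\mathfrak g,J)$ admits no LCSKT structure.

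For part (b), $J$ nilpotent: if $\epsilon=0$, then $J$ being nilpotent and non-bi-invariant makes $\mathfrak g$ $2$-step with $b_1(\mathfrak g)\geq4$, so the reduced equations \eqref{Eq: J nil0} apply; otherwise one uses \eqref{Eq: J nil1} with $\epsilon=1$. By \eqref{dH1}, \eqref{dH2}, $dH$ is a multiple of $\omega^{12\overline1\overline2}$ with coefficient $\kappa:=-2t[\rho+|B|^2-2\,\mathrm{Re}\,D]$ (if $\epsilon=0$), resp. $\kappa:=-2t[\rho+|B|^2+|C|^2]$ (if $\epsilon=1$). \emph{Case $\kappa=0$ (so $dH=0$).} A componentwise inspection of $\alpha^{(1,0)}\wedge H^{(2,1)}=0$ shows that a non-zero closed $\alpha$ with $\alpha\wedge H=0$ can exist only if every structure constant but the $\omega^{1\overline1}$-coefficient of $d\omega^3$ vanishes, i.e. (after relabelling $\omega^2\leftrightarrow\omega^3$ when $\epsilon=1$) only for the equations \eqref{Eq: thm-nil-triv}; there one computes $H=\omega^{1\overline1}\wedge\gamma$ with $\gamma=s\,\omega^2+i\bar v\,\omega^3-s\,\overline\omega^2+iv\,\overline\omega^3$ nowhere zero, so that $\alpha\wedge H=0$ exactly when $\alpha-2\,\mathrm{Re}(\lambda_1\omega^1)$ is a multiple of $\gamma$; since a closed $\alpha$ has equal $\omega^3$- and $\overline\omega^3$-coefficients while $\gamma$ has opposite ones, that multiple is zero, the LCSKT $1$-forms are precisely $\alpha=2\,\mathrm{Re}(\lambda_1\omega^1)$, $\lambda_1\in\C\setminus\{0\}$, independent of the metric, and all such structures are trivial. \emph{Case $\kappa\neq0$.} If $\epsilon=0$, the $(3,1)$-identity ties $\lambda_2$ (and, when $\lambda_3\neq0$, also $\lambda_1$) to $\lambda_1$; imposing the vanishing of the $\omega^{12\overline1\overline3},\omega^{12\overline2\overline3},\omega^{13\overline1\overline2},\omega^{23\overline1\overline2}$-components of the $(2,2)$-part then forces either $\alpha=0$ or a phase relation entailing $|B|=1$ together with $v=z=0$, and in the latter situation the $\omega^{12\overline1\overline2}$-coefficient of the $(2,2)$-part turns out to be $0$ or purely imaginary, hence cannot equal the non-zero real $\kappa$; so no LCSKT structure arises when $\epsilon=0$ and $\kappa\neq0$. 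If $\epsilon=1$ and $\kappa\neq0$, the same componentwise analysis of the two identities forces $\rho=1$, $C=0$ and $|B|=1$, the sign being pinned to $B=-1$ by requiring the $\omega^{12\overline1\overline2}$-coefficient to carry the prescribed non-zero real value, i.e. $(\mathfrak g,J)$ has the equations \eqref{Eq: thm-nil-nontriv}. Conversely, for \eqref{Eq: thm-nil-nontriv} one has $\lambda_3=0$, the $(3,1)$-identity reads $\lambda_1 t + i\lambda_2\overline v=0$ with $\lambda_2\in\R$, the off-diagonal $(2,2)$-components vanish automatically, and the $\omega^{12\overline1\overline2}$-component reads $2\,\mathrm{Im}(\lambda_1 v)+2s\lambda_2 = -4t$; substituting $\lambda_1 v = -i\lambda_2|v|^2/t$ yields $\lambda_2 = -2t^2/(st-|v|^2)$, well-defined and non-zero since $st-|v|^2>0$, and back-substitution produces exactly the displayed $\alpha$. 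Thus every Hermitian structure on \eqref{Eq: thm-nil-nontriv} is LCSKT, necessarily non-trivially because $dH=-4t\,\omega^{12\overline1\overline2}\neq0$.

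The main obstacle is the bookkeeping of the last paragraph: controlling the $(2,2)$-part of $\alpha\wedge H$ in the $(2,2)$-basis and checking that, once the $(3,1)$-identity and the vanishing of the off-diagonal components $\omega^{12\overline1\overline3},\omega^{13\overline1\overline2},\omega^{12\overline2\overline3},\omega^{23\overline1\overline2}$ are imposed, the surviving $\omega^{12\overline1\overline2}$-coefficient is forced to be purely imaginary — hence incompatible with a non-zero real $\kappa$ — in every case \emph{except} \eqref{Eq: thm-nil-triv} (where it is zero and $\kappa=0$) and \eqref{Eq: thm-nil-nontriv} (where the phases align and it attains a genuine non-zero real value determined by the metric). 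This is precisely the mechanism that singles out the $3$-step algebra \eqref{Eq: thm-nil-nontriv}; everything else is routine manipulation of wedge products of $(1,0)$- and $(0,1)$-forms.
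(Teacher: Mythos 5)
Your proposal is correct and follows essentially the same route as the paper: a case-by-case, coefficient-by-coefficient analysis of $dH-\alpha\wedge H=0$ in the normal-form coframes of Proposition \ref{Prop: Ugarte}, using the closedness constraints on $\alpha$, with the explicit bidegree splitting into the $(3,1)$- and $(2,2)$-parts being exactly what the paper does implicitly when it extracts coefficients of $\omega^{123\overline{\jmath}}$ versus $\omega^{12\overline{1}\overline{2}}$, etc. The only blemishes are cosmetic: your $\gamma$ in the trivial case is written in the $\epsilon=1$ labelling rather than that of \eqref{Eq: thm-nil-triv}, and the ``$|B|=1$ together with $v=z=0$'' description of the $\epsilon=0$, $\kappa\neq0$ obstruction does not match the actual subcase structure ($D\neq0$ forces $\alpha=0$; $D=0$ splits on $\rho$), though the conclusion is the same.
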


\begin{proof}
Firstly,  we can rewrite  the equation $dH =\a \wedge H$ as
\begin{equation}\label{Eq: LCSKT-condition}
    dH -\a \wedge H = 0
\end{equation}
In the  proof  we will study separately the two cases:  $J$ non-nilpotent and $J$ nilpotent.
\smallskip

\noindent (a) For $J$ non-nilpotent: by imposing the vanishing of the coefficient of $\omega^{23\overline 1 \overline 3}$ in \eqref{Eq: LCSKT-condition} and by using the constraints (\ref{Eq: a1}) (i.e $\lambda_2= 0$) and (\ref{Eq: a2}), we obtain
$\overline \lambda_3(-EH_{2\overline 1 \overline 3}-H_{23\overline 1})=0,$ i.e.  $\overline \lambda_3 (is-bt)=0,$
which implies $\lambda_3=0$ since $s,t > 0$ and  $b\in \mathbb{R} \backslash\{ 0\}$.\\
Then, by the vanishing of the coefficient of $\omega^{12\overline 1\overline 3}$ in \eqref{Eq: LCSKT-condition}, we get $\lambda_1 H_{2\overline 1 \overline 3} = 0 $,
which implies $\lambda_1 = 0$, since $H_{2\overline 1 \overline 3} = i(-is+bt)\overline E \neq 0$.\\
Therefore, if $J$ is non-nilpotent,  then  $\a = 0$ and $(\frak g,J)$ cannot admit any non-trivial  LCSKT neither any SKT structure, since $dH \neq 0$ from \eqref{dH0}.

\noindent (b) For $J$ nilpotent, we consider  separately the two cases: (b1)  $\epsilon =0$ and  (b2)  $\epsilon =1$.\\
  \begin{enumerate}
\item[(b1)] From the constraint (\ref{Eq: b2}), i.e.  $\lambda_3 \in \R$,  and  by the vanishing of the coefficient of $\omega^{13\overline 1 \overline 3}$ in \eqref{Eq: LCSKT-condition}, we obtain  $2t\lambda_3=0$ so $\lambda_3=0$  $(t \neq 0)$.\\
By the vanishing of the coefficient of $\omega^{123\overline 2}$ and of $\omega^{123\overline 1}$ in \eqref{Eq: LCSKT-condition},
we get $\lambda_1 \overline D =0 $ and $\lambda_2 = \overline B \lambda_1.$
If $D \neq 0$, we have $\lambda_1 = 0$ and $\lambda_2 = 0,$ so $\a = 0.$
If $D=0$, we distinguish the two cases $\rho=0$ (i.e.  $J$ abelian) and $\rho=1$ (i.e. $J$ non-abelian).\\
If $\rho = 0$, by the vanishing of the coefficient of $\omega^{12\overline 1 \overline 3}$ in \eqref{Eq: LCSKT-condition}, we obtain $t\lambda_2=0$, so $\lambda_2=0$. By the vanishing of the coefficient of $\omega^{12 \overline 1 \overline 2}$ in \eqref{Eq: LCSKT-condition}, we get $-2t|B|^2=0$,  so for $B = 0$ with an arbitrary $\lambda_1 \neq 0$, we can deduce that every $\Omega$ defines a locally conformal SKT structure on the Lie algebra with complex structure equations
$$
\left \{ \begin{array}{l}
d \omega^j =0,\, j=1,2\\[3pt]
d \omega^3 =  \omega^{1\overline 1},
\end{array}
\right.
$$
with a non-unique $\a= 2Re(\lambda_1\omega_1),$ $\lambda_1 \neq 0.$ Remark that $H$ is degenerate with $dH=0$.\\
If $\rho=1$, by the vanishing of the coefficient of $\omega^{12\overline 1\overline 3}$ and of $\omega^{12\overline 2\overline 3}$ in \eqref{Eq: LCSKT-condition}, we get
 $\overline \lambda_1 = -\lambda_2$ and $\overline \lambda_2=-B\lambda_2,$ which implies that $\lambda_2(|B|^2-1)=0$.\\
If $|B|\neq 1$, we have $\lambda_2=0$ and $\lambda_1=0$, so $\alpha=0.$\\
If $|B|=1,$ by the vanishing of the coefficient of $\omega^{12\overline 1\overline 2}$ in \eqref{Eq: LCSKT-condition} we get $-4t=0$ which is impossible $(t \neq 0)$, so in this case $(\frak g,J)$ cannot admit any locally conformal SKT structure.\\

    \item [(b2)]  Since    $\epsilon=1$, by the constraint \eqref{Eq: c2} we get $\lambda_2 \in \R$.  We distinguish two cases:\\
 i)  $\rho=0$, i.e. $J$ abelian  and  ii) $\rho=1$, i.e $J$ non-abelian.\\
In the case i),  if we  have $B= C=0$,  from \eqref{dH2} we get  $dH = 0$.\\
By the vanishing of the coefficients of $\omega^{12\overline 1 \overline 2}$ and of $\omega^{12\overline 1 \overline 3}$ in \eqref{Eq: LCSKT-condition}, we get $\lambda_2 s=0$ and $\lambda_3=i \frac{\overline v}{s} \lambda_2,$ so $\lambda_2=\lambda_3=0$.
As a consequence, for arbitrary $\lambda_1 \neq 0$  we have  a trivial  LCSKT  structure. Also, notice that and after interchanging $\omega^2$ with $\omega^3$ we obtain again the same structure equations (\ref{Eq: thm-nil-triv}).\\
If $B = 0$ and $C \neq 0$, by (\ref{Eq: c3}) we get $\lambda_3=0$. By the vanishing of the coefficients of $\omega^{123\overline 3}$ and  of $\omega^{12\overline 1 \overline 3}$ in \eqref{Eq: LCSKT-condition} we get $\lambda_2 t\overline C = 0$, $Ct\lambda_1  + i v\lambda_2  = 0,$ so $\lambda_2=0$ and $\lambda_1=0$, i.e. $\a=0$.\\
If $C = 0$ and $B \neq 0,$ by (\ref{Eq: c3}) we obtain $\lambda_3=0$. By the vanishing of the coefficients of $\omega^{12\overline 2\overline 3}$ and of  $\omega^{123\overline 1}$ in \eqref{Eq: LCSKT-condition} we get $\lambda_2 tB= 0$, $\lambda_1 \overline B=0$ so $\lambda_2=0$ and $\lambda_1=0$, i.e $\a=0$.\\
If $C \neq 0$ and $B \neq 0,$  by the vanishing of the coefficients of  $\omega^{123 \overline 2}$,  $\omega^{13\overline 1 \overline 3}$ and of
 $\omega^{123\overline 1}$  in \eqref{Eq: LCSKT-condition}, we get the following system
$$
\left \{  \begin{array}{l}
   \lambda_2 = - i\frac{v}{t} \lambda_3 ,\\[3pt]
 \overline \lambda_3 \overline v = \lambda_3 v,\\[3pt]
  -\overline B t \lambda_1 + i \overline v \lambda_2 + i(is-\overline B z)\lambda_3 = 0
 \end{array}
 \right.
 $$
By using the first and second equations, we obtain
$$\overline \lambda_2 =-i \frac{\overline v}{t} \overline \lambda_3 =- i(\frac{v}{t}\lambda_3) = -\lambda_2$$
since $\lambda_2= \overline \lambda_2$ (constraint (\ref{Eq: c2})), which implies that  $\lambda_2=0$, so either  $\lambda_3=0$ or $v=0$.\\
If $\lambda_3=0$, the third equation implies that  $-\overline Bt\lambda_1=0$ so  $\lambda_1=0$, i.e. $\a=0$.\\
If $v=0$, by the vanishing  of the coefficients of  $\omega^{12\overline 1 \overline 2}$  in \eqref{Eq: LCSKT-condition}, we obtain $2t [|B|^2+|C|^2 ]=0$ so $B=C=0$ and we get a contradiction.\\

In the case ii), i.e $(\rho=1)$, then from the constraint (\ref{Eq: c1}) we have $\lambda_3=0$. By the vanishing of the coefficients of $\omega^{123\overline 2}$,  $\omega^{12\overline 2 \overline 3}$, $\omega^{12\overline 1 \overline 3}$ and of $\omega^{123\overline 1}$ in \eqref{Eq: LCSKT-condition},\\
 we obtain the following system
$$
\left \{  \begin{array}{l}
   \lambda_2 t \overline C =0,\\[3pt]
   \lambda_2t (B+1)=0,\\[3pt]
 -  \lambda_1 t C - i \lambda_2 v + \overline \lambda_1 t =0,\\
  -\lambda_1 t \overline B + i \lambda_2 \overline v = 0
 \end{array}
 \right.
 $$
In the first case $C \neq 0$, by the first equation we get $\lambda_2=0$ and if $B\neq 0$ by the fourth equation we have $\lambda_1=0$, so $\a=0$.
If $B=0$ and $\lambda_1 \neq 0$, from the third equation we get $\overline \lambda_1=C\lambda_1,$ which implies that $|C|^2=1$. Moreover, by the vanishing of the coefficient of $\omega^{12\overline 1 \overline 2}$ in \eqref{Eq: LCSKT-condition} we obtain $-4t=0$ and we get a contradiction $(t\neq 0)$.\\
In the second case $C=0$, for $B \neq -1$, by the second equation we get $\lambda_2=0$  and the third equation implies that $\lambda_1=0$, so $\a=0$.\\
If $C=0$ and $B=-1$, by the third and the fourth equations we obtain
$$\lambda_1 = -i\frac{\overline v}{t} \lambda_2, \quad \overline \lambda_1= i\frac{v}{t} \lambda_2$$
By the vanishing of the coefficient of $\omega^{12\overline 1 \overline 2}$ in \eqref{Eq: LCSKT-condition} and by substituting the last expressions of $\lambda_1$ and $\overline \lambda_1$, we obtain
$$-4t -2 \left (\frac{ts - |v|^2}{t} \right )\lambda_2 = 0 $$
so $\lambda_2 = \overline \lambda_2 = \frac{-2t^2}{ts - |v|^2} \neq 0$ and $\lambda_1 = \frac{2it\overline v}{ts - |v|^2}$, since $ts - |v|^2\neq 0$.\\

Hence, every 2-form $\Omega$, given by \eqref{fundformgen},  defines a  non-trivial LCSKT structure on the nilpotent Lie algebra $(\frak g,J)$ with the complex structure equations
$$
\left \{  \begin{array}{l}
 d \omega^1 =0,\\[3pt]
 d \omega^2 =\omega^{1\overline 1},\\[3pt]
 d \omega^3 = \omega^{12} - \omega^{1\overline 2},
 \end{array}
 \right.
 $$
with a unique (non-zero) closed $1$-form
$$
\a = \frac{2it\overline v}{ts-|v|^2}\omega^1 - \frac{2itv}{ts-|v|^2}\overline \omega^1 -\frac{4t^2}{ts-|v|^2} Re(\omega^2).
$$
\end{enumerate}

\end{proof}

\begin{remark}\label{rem: J-biinv}
If $\frak g$ is not abelian and  $J$ is bi-invariant, then $(g,J)$  has structure equations
$$\left\{\begin{array}{l}
d\omega^1 = d\omega^2 =0,\\
d\omega^3 = \omega^{12}.
\end{array} \right.$$
Analogous computations to those of Theorem \ref{Thm: nilpotent-main} show that for a fundamental form as in  eq.  \eqref{fundformgen},  we get $dH = 2t \omega^{12\overline{12}} $.  If $\alpha$ as in eq. \eqref{espralpha} is a closed 1-form then $\lambda_3 =0$. Imposing the condition $dH - \alpha \wedge H =0 $ implies, by the vanishing of the coefficients $\omega_{13\overline{12}}$ and $\omega_{23\overline{12}}$,  that $\lambda_1 = \lambda_2 = 0$.  Thus $\alpha =0$, which is a contradiction.
Then,  if $J$ is bi-invariant, there does not exist any $J$-Hermitian metric $g$ such that $(J,g)$ is LCSKT.
\end{remark}

In order to determine, up to isomorphism, the  underlying   $6$-dimensional LCSKT nilpotent Lie algebras,   we will adopt for the list of Lie algebras  the notation ${\frak h}_k$ used in \cite[Theorem 8]{CFGU}.    Moreover,  for instance by $(0,0,0,0,0,12)$  we  will denote the nilpotent  Lie algebra with real structure equations

$$\left \{
\begin{array}{l}
de^k = 0,\, k=1,...,5, \\
 de^6 = e^1 \wedge e^2,
 \end{array}
 \right.$$
where  $\{ e^j \}$ is a basis of real $1$-forms of the Lie algebra and $d$ denotes  the Chevalley-Eilenberg differential.

\begin{corol}\label{Cor: nilpotent-main}
A 6-dimensional nilpotent Lie algebra  $\frak g$ admits an LCSKT structure $(J, g)$  if and only if $\frak g$ is isomorphic either to   ${\frak h}_8 = (0,0,0,0,0,12)$ or ${\frak h}_{16}= (0,0,0,12,14,24)$.  Moreover,  if $\frak g \cong \frak h_8$, the underlying complex structure $J$  must be abelian and every LCSKT structure is trivial. If $\frak g \cong \frak h_{16}$, $J$  has to be (non-abelian) nilpotent and every LCSKT structure is  non-trivial.
\end{corol}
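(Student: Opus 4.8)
The plan is to translate the complex-structure classification of Theorem~\ref{Thm: nilpotent-main} into the real notation $\frak h_k$ of \cite{CFGU}, and then to check each of the two surviving families. First I would recall that, by Theorem~\ref{Thm: nilpotent-main}, a $6$-dimensional nilpotent Lie algebra $(\frak g,J)$ with $J$ not bi-invariant admits an LCSKT structure if and only if $(\frak g,J)$ has complex structure equations \eqref{Eq: thm-nil-triv} or \eqref{Eq: thm-nil-nontriv}, while Remark~\ref{rem: J-biinv} rules out the bi-invariant case; so it suffices to identify the two underlying real Lie algebras. For \eqref{Eq: thm-nil-triv}, writing $\omega^1 = e^1 + i e^2$, $\omega^2 = e^3 + i e^4$, $\omega^3 = e^5 + i e^6$, the only nonzero equation $d\omega^3 = \omega^{1\bar 1} = -2i\, e^1\wedge e^2$ gives $de^5 = 0$, $de^6 = -2 e^1\wedge e^2$, which after rescaling is exactly $(0,0,0,0,0,12) = \frak h_8$. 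For \eqref{Eq: thm-nil-nontriv}, a similar substitution turns $d\omega^2 = \omega^{1\bar1}$ and $d\omega^3 = \omega^{12} - \omega^{1\bar 2}$ into real structure equations with lower central series of length three; comparing the resulting equations (after a linear change of real basis) with the list in \cite[Theorem~8]{CFGU} identifies the algebra as $\frak h_{16} = (0,0,0,12,14,24)$. This change-of-basis bookkeeping is the one genuinely computational step, but it is routine.

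Next I would record the consistency of this identification with what is already known. The algebra $\frak h_8$ is the one SKT nilpotent Lie algebra with $b_1 = 5$ appearing in \cite{FPS}, and \eqref{Eq: thm-nil-triv} gives $dH = 0$ by \eqref{dH1} (it is the case $\rho = 0$, $B = D = 0$), so on $\frak h_8$ every LCSKT structure is trivial, and the complex structure is abelian since $\rho = 0$. On the other hand $\frak h_{16}$ is $3$-step (its lower central series is $\frak g \supset \langle e^4,e^5,e^6\rangle \supset \langle e^6\rangle \supset 0$ up to relabelling), the complex structure in \eqref{Eq: thm-nil-nontriv} has $\epsilon = 1$, $\rho = 1$, $B = -1$, $C = 0$, hence is nilpotent and non-abelian, and by \eqref{dH2} we get $dH = -2t[1 + 1]\,\omega^{12\bar1\bar2} = -4t\,\omega^{12\bar1\bar2}\neq 0$, so every LCSKT structure is non-trivial. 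I would also note $\frak h_{16}$ does not appear among the four SKT nilpotent Lie algebras of \cite{FPS}, which is consistent with $dH\neq 0$.

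The last point to address is that different complex structures on the \emph{same} real Lie algebra do not produce spurious extra cases: Theorem~\ref{Thm: nilpotent-main} already classifies \emph{all} pairs $(\frak g,J)$, so the corollary follows simply by projecting onto the first factor and listing the (two) isomorphism classes of $\frak g$ that occur; no pair $(\frak g,J)$ with $\frak g\cong\frak h_8$ or $\frak h_{16}$ and a \emph{different} $J$ can be LCSKT, again by Theorem~\ref{Thm: nilpotent-main} together with Remark~\ref{rem: J-biinv}. The main (and only) obstacle is therefore the explicit real-basis computation showing that \eqref{Eq: thm-nil-nontriv} is isomorphic to $(0,0,0,12,14,24)$ and \eqref{Eq: thm-nil-triv} to $(0,0,0,0,0,12)$; once these two identifications are made, the statements about abelian/non-abelian, nilpotent $J$, step length, and triviality of the LCSKT structure are immediate consequences of the already-established $\epsilon,\rho,B,C$ values and formulas \eqref{dH1}--\eqref{dH2}.
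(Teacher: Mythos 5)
Your proposal is correct and follows essentially the same route as the paper: pass to a real basis via $\omega^j=f^{2j-1}+if^{2j}$, identify the structure equations \eqref{Eq: thm-nil-triv} and \eqref{Eq: thm-nil-nontriv} with $\frak h_8$ and $\frak h_{16}$ by an explicit linear change of basis, and read off triviality versus non-triviality from $dH=0$ versus $dH\neq 0$. The only difference is that the paper actually writes out the change of basis for the $\frak h_{16}$ case that you defer as routine (and note that the second term of the lower central series of $\frak h_{16}$ is $\langle e_5,e_6\rangle$ rather than a line, though the $3$-step conclusion is unaffected).
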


\begin{proof}
Let $\{f^j\}_{j=1}^6$ be the basis   of  real 1-forms  on $\frak g$ such that
\begin{equation}\label{Eq: real-forms}
   \omega^1=f^1+i f^2, \, \, \omega^2= f^3+if^4, \, \, \omega^3=f^5+if^6.
\end{equation}
To obtain the real structure equations  of $\frak g$ we  use \eqref{Eq: real-forms} and we   impose   \eqref{Eq: thm-nil-triv} and \eqref{Eq: thm-nil-nontriv}, respectively.  For case (\ref{Eq: thm-nil-triv}),
if we consider the change of basis
\begin{equation*}
    e^1= -\sqrt{2}f^2, \, e^2 = \sqrt{2}f^1, \, \, e^k= f^k,\,\, k=3, \ldots,6,
\end{equation*}
we obtain the structure equations  of ${\frak h}_8=(0,0,0,0,0,12)$.
For case (\ref{Eq: thm-nil-nontriv}),
considering the change of basis
\begin{equation*}
    e^1=  \sqrt{2}f^2, \, e^2 = \sqrt{2}f^1, \, \, e^3= f^3,\,\, e^4= f^4\,\,
    e^5= -\frac{1}{\sqrt{2}} f^5, \, e^6 = \frac{1}{\sqrt{2}}f^6,
\end{equation*}
we obtain the structure equations
of  the 3-step  nilpotent Lie algebra ${\frak h}_{16}=(0,0,0,12,14,24)$.  On $\frak h_8$, for  every LCSKT structure we have
$d H =0$ and the LCSKT structure is trivial.  On the other hand,  on $\frak h_{16}$ we have always $d H \neq 0$ and a non-trivial LCSKT structure.
\end{proof}

\noindent\begin{remark} \label{Rem: nilpotent}
Note that  $\frak h_{16}$  cannot admit any SKT structure since it is $3$-step nilpotent, \cite{FPS, AN}. Moreover, by \cite[Prop. 25 and Th. 31]{Ug} $\frak h_8$ and ${\frak h}_{16}$ do not admit any balanced (or LCK)  Hermitian structure.
The simply connected Lie groups with Lie algebras $\frak h_8$, $\frak h_{16}$ both admit  lattices,
since every  simply connected nilpotent Lie  group  whose Lie algebra    has   rational  structure constants  has a lattice (see \cite[Theorem 2.1]{MSR} and \cite{AM45}).
\end{remark}

\noindent\begin{remark}
Nilmanifolds of dimension 8 admitting balanced metrics have recently been constructed in \cite{LUV}. It would be a natural question to investigate if such manifolds can simultaneously satisfy the LCSKT condition.
\end{remark}

As a consequence of Theorem \ref{Thm: nilpotent-main}, Corollary \ref{Cor: nilpotent-main}  and Remark  \ref{Rem: nilpotent} we have the following

\begin{teo} If $M^6 =\Gamma \backslash G$ is a  $6$-dimensional nilmanifold, then $M^6$ admits an invariant LCSKT structure $(J, g)$  if and only if  the Lie algebra of $\frak g$ of $G$  is isomorphic either to $\frak h_8$  or $\frak h_{16}$.  The LCSKT  structure is non-trivial (resp. trivial)   if and only if $\frak g$ is isomorphic to $\frak h_{16}$ (resp. $\frak h _{8}$). Moreover, $M^6$ does not admit any invariant balanced or invariant  LCK structure.  If $\frak g \cong \frak h_{16}$, $M^6$ cannot have any SKT structure.

\end{teo}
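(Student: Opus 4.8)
The plan is to obtain this statement as a direct corollary of Theorem~\ref{Thm: nilpotent-main}, Corollary~\ref{Cor: nilpotent-main} and Remark~\ref{Rem: nilpotent}, by translating back and forth between left-invariant structures on $G$ (equivalently, structures on $\frak g$) and invariant structures on the quotient $M^6 = \Gamma\backslash G$.

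First I would recall that, by definition, an invariant Hermitian structure on $M^6$ is one induced by a left-invariant Hermitian structure on $G$, hence determined by a Hermitian structure on $\frak g$, and that under this correspondence the Bismut torsion form and its exterior derivative are the left-invariant forms determined by the corresponding objects on $\frak g$; in particular $M^6$ carries an invariant LCSKT structure if and only if $\frak g$ does. Thus Corollary~\ref{Cor: nilpotent-main} immediately gives that $M^6$ admits an invariant LCSKT structure if and only if $\frak g \cong \frak h_8$ or $\frak g \cong \frak h_{16}$, and the same correspondence, together with the values of $dH$ recorded in Corollary~\ref{Cor: nilpotent-main}, yields the triviality dichotomy: $dH = 0$ for every LCSKT structure when $\frak g \cong \frak h_8$, and $dH \neq 0$ for every LCSKT structure when $\frak g \cong \frak h_{16}$. (The statement is non-vacuous: since $\frak h_8 = (0,0,0,0,0,12)$ and $\frak h_{16} = (0,0,0,12,14,24)$ have rational structure constants, the associated simply connected nilpotent Lie groups admit lattices by Malcev's theorem, cf.\ Remark~\ref{Rem: nilpotent}.)

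For the \emph{moreover} part I would argue as follows. Non-existence of invariant balanced and invariant LCK structures on $M^6$ follows, again via the correspondence above, from the fact recorded in Remark~\ref{Rem: nilpotent} that neither $\frak h_8$ nor $\frak h_{16}$ admits a balanced or an LCK Hermitian structure (by \cite[Prop.\ 25 and Th.\ 31]{Ug}). Finally, if $\frak g \cong \frak h_{16}$ then $\frak g$ is $3$-step nilpotent, so by \cite{FPS} (see also \cite{EFV}), where the $6$-dimensional SKT nilpotent Lie algebras are shown to be $2$-step, $\frak h_{16}$ admits no SKT structure at all; combining this with the symmetrization procedure for nilmanifolds — any Hermitian metric with closed Bismut torsion averages to an invariant one, cf.\ the discussion in Section~\ref{Sec: general} and \cite{FG, Ug} — we conclude that $M^6$ admits no SKT structure whatsoever.

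The proof is purely a matter of assembling the earlier results, so there is no serious technical obstacle; the only points deserving explicit mention are the verification that $\frak h_8$ and $\frak h_{16}$ genuinely carry lattices (handled by rationality of the structure constants) and the upgrade from ``no \emph{invariant} SKT metric'' to ``no SKT metric'' in the $\frak h_{16}$ case, which rests on the symmetrization principle rather than on any new computation.
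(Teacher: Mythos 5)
Your proposal is correct and follows essentially the same route as the paper, which presents this theorem as a direct consequence of Theorem \ref{Thm: nilpotent-main}, Corollary \ref{Cor: nilpotent-main} and Remark \ref{Rem: nilpotent}, using exactly the correspondence between invariant structures on $\Gamma\backslash G$ and structures on $\frak g$, the lattice existence via rational structure constants, and (for the final SKT claim) the $3$-step nilpotency of $\frak h_{16}$ together with the symmetrization principle recalled in Section \ref{Sec: general}. Your explicit flagging of the upgrade from ``no invariant SKT metric'' to ``no SKT metric'' is if anything slightly more careful than the paper's own (unwritten) argument.
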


\begin{remark}
Note that according to \cite[Proposition 6.1] {FPS}, the holonomy group  of the Bismut connection $\nabla^B$ of a  6-dimensional nilmanifold $M^6= \Gamma/G$ with an invariant   LCSKT structure  $(J,g)$ is not reduced to $SU(3)$, since the LCSKT  structure  $(J,g)$ is not balanced.
\end{remark}

Using  the complex structure equations \eqref{Eq: thm-nil-triv} and \eqref{Eq: thm-nil-nontriv} associated to  $(M^6, J, g)$  (see Theorem \ref{Thm: nilpotent-main}),  one can check that  the  Hermitian metric with fundamental form
\begin{equation*}
    \Omega  = \frac{i}{2} (\omega^{1 \overline 1} + \omega^{2 \overline 2} + \omega^{3\overline 3})
\end{equation*}
is  LCB with $\rho^B \neq 0 $.
Indeed,  if we  choose the  basis $\{f^j\}$ of $1$-forms  such that  $$\omega^1 = f^1 + i f^2, \, \omega^2 = f^3 + i f^4, \, \omega^3 = f^5 + i f^6,
$$
we have $d\Omega =2 f^{125}$ and   $\theta =f^5$ for $\frak g \cong \frak h_8$   and
for $\frak g\cong \frak h_{16},$  we obtain $d\Omega = f^{123} - f^{145} -f^{246}$ and $\theta =f^3.$  As a consequence in both cases $d\theta=0$. To compute the Bismut Ricci form we can use that  $\rho^C$ vanishes by \cite{LRV}.
 Therefore, as a consequence of \eqref{Eq: Ricci-forms}\begin{equation*}
    \rho^B =-2 dJ\theta
\end{equation*}
and we get for both  cases  $\rho^B= 4 f^{12}$.

\section{Invariant LCSKT structures on almost abelian solvmanifolds}\label{Sec: almost-abelian}

In this section we study the existence of invariant  LCSKT structures on almost abelian  solvmanifolds,  i.e. on     compact  quotients $M = \Gamma/G$, where $G$ is a connected and simply connected almost abelian   Lie group and $\Gamma$ is a lattice in $G$.  Therefore, we will determine the necessary and sufficient conditions for the existence of an LCSKT structure on   an almost abelian Lie algebra $\frak g$.
We recall that a  Lie algebra $\frak g$ is called {\em almost abelian}  if it contains a codimension-one abelian ideal $\frak n,$ so it is the next simplest  type of Lie algebra after abelian Lie algebras.  Let  $J$ be a complex structure on $\frak g$, then $\frak n^1 :=  \frak n \cap  J \frak n$  is the maximal $J$-invariant subspace of $\frak n$.
By  \cite{AL, LRV}, if $(J,g)$ is a Hermitian structure on $\frak g$, then  there exists a unitary  basis $\{e_1,\ldots ,e_{2n}\}$ of  $(\frak g, J, g)$ such that $$\frak n = span \langle e_1, \ldots,e_{2n-1} \rangle,  \frak n_1= span \langle e_2, \ldots,e_{2n -1} \rangle, J e_i = e_{2n+1-i},  \, i = 1,.\ldots, n.$$
So in particular
\begin{equation*}
    Je_1 = e_{2n}, \quad J(\frak n_1) \subset \frak n_1.
\end{equation*}
We set $J_1:= J\vert_{\frak n_1}$ and the  basis  $\{e_j\}_{j=1}^{2n}$ will be called  \emph{adapted} to the  orthogonal decomposition (splitting)
$$
\frak g = \langle {e_1}  \rangle \oplus \frak n_1  \oplus  \langle e_{2n} \rangle, \, \frak n = \langle {e_1} \rangle  \oplus \frak n_1.
$$
 The matrix  $B$  associated to $(ad_{e_{2n}})\vert_{\frak n}$ determines the whole information about the Lie algebra structure  and $\frak g$  is therefore isomorphic to the semidirect product $\R \ltimes ad_{e_{2n}}\vert_{\frak n}  \R^{2n-1}$.

As proved  in  \cite{LRV},  the matrix $B$ associated with $ad_{e_{2n}}\vert_{\frak n }$  in  the adapted basis  $\{e_j\}_{j=1}^{2n}$ is of the form
$$
B := (ad_{e_{2n}})\vert_{\frak n } = \left(
  \begin{array}{cc}
    a & 0 \\
    v & A \\
  \end{array}
\right),  \quad a \in \R, \, v \in \frak n_1, \, A \in {\frak {gl}}  (\frak n_1), \, \, [A, J_1]=0.
$$
Here, we are identifying $v\in\frak n_1$ with its coordinates in the basis $\{e_2, \cdots, e_{2n-1}\}$ freely.
In terms of commutation relations, this implies
\begin{equation}\label{Eq: alm-ab-brackets}
    [e_{2n},e_{1}] = ae_1+ v,\quad [e_{2n},X]= AX, \quad X \in \frak n_1
\end{equation}

In this way, every  Hermitian almost abelian Lie algebra $(\frak g, J,g)$ is characterized by the triplet $(a,v,A)$ or, equivalently, by the real $(2n-1)\times (2n-1)$ matrix corresponding to $B$.  We  will denote such an algebra by $\frak g(a,v,A)$.
Let us recall the conditions imposed on the algebraic data $(a,v,A) $, for which an almost abelian Lie algebra admits a K\"{a}hler or an SKT structure.

\begin{teo}[\cite{LW}]
A    Hermitian almost abelian Lie algebra  $(\frak g (a,v,A), J, g)$  of dimension $2n$  is  K\"{a}hler  if and only if
\begin{equation*}
A \in \frak{so} (\frak n_1) \quad and \quad v=0
\end{equation*}
\end{teo}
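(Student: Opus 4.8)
The plan is to compute the exterior derivative of the fundamental form $\Omega$ in the adapted basis $\{e^j\}_{j=1}^{2n}$ and translate the Kähler condition $d\Omega=0$ into linear-algebraic conditions on the triplet $(a,v,A)$. First I would write down the Chevalley--Eilenberg differential on the dual basis: from the bracket relations \eqref{Eq: alm-ab-brackets} one gets $de^1 = -a\, e^{1}\wedge e^{2n} - (\text{terms coming from } v)$, $de^j = -(A^* e^j)\wedge e^{2n}$ for the coordinates $e^2,\dots,e^{2n-1}$ on $\frak n_1^*$, and $de^{2n}=0$, since $\frak n$ is abelian and $e_{2n}$ acts on it by $B$. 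More precisely, the only $1$-forms with nonzero differential are those dual to $\frak n$, and each picks up a single wedge with $e^{2n}$; the vector $v$ contributes the term $-\,\langle v, \cdot\rangle\, e^{j}\wedge e^{2n}$ mixing $e^1$ with the $\frak n_1$-directions.

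Next I would express $\Omega$ in this basis. Since the basis is unitary with $Je_i=e_{2n+1-i}$, we have $\Omega = e^1\wedge e^{2n} + \Omega_1$, where $\Omega_1$ is the fundamental form of $(\frak n_1, J_1, g|_{\frak n_1})$, a $2$-form living entirely in $\Lambda^2\frak n_1^*$. Then $d\Omega = d(e^1\wedge e^{2n}) + d\Omega_1 = (de^1)\wedge e^{2n} + d\Omega_1$, using $de^{2n}=0$. The term $(de^1)\wedge e^{2n}$ involves $a\,e^1\wedge e^{2n}\wedge e^{2n}=0$ and the $v$-term $(\sum_j v_j e^j)\wedge e^{2n}\wedge e^{2n}=0$ as well — so actually one must be careful: $de^1\wedge e^{2n}$ vanishes identically because $de^1$ is itself a multiple of things wedged with $e^{2n}$. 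Hence the first contribution to $d\Omega$ really comes from $d\Omega_1$, where $\Omega_1 = \sum g(J_1 e_k, e_l)\, e^k\wedge e^l$ and $d$ acts on each $e^k$ ($k\in\{2,\dots,2n-1\}$) by $de^k = -(A^*e^k)\wedge e^{2n}$. Collecting, $d\Omega_1 = -\big(\sum_{k,l} \Omega_{1,kl}\,[(A^*e^k)\wedge e^l + e^k\wedge(A^*e^l)]\big)\wedge e^{2n}$, which is $(\,\text{something in }\Lambda^2\frak n_1^*)\wedge e^{2n}$; the bracketed $2$-form is precisely $-\mathcal{L}_A\Omega_1$, the infinitesimal variation of $\Omega_1$ under $A$ acting on $\frak n_1$. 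I also need the $e^1$-involving part: $\Omega$ has no $e^1\wedge e^j$ terms for $j\in\{2,\dots,2n-1\}$ by the choice of adapted basis (since $Je_1=e_{2n}$ and $J(\frak n_1)\subseteq\frak n_1$), so there is no further contribution.

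Then $d\Omega = 0$ becomes $\mathcal{L}_A\Omega_1 = 0$ as a $2$-form on $\frak n_1$, i.e. $\Omega_1(AX,Y)+\Omega_1(X,AY)=0$ for all $X,Y\in\frak n_1$, which says $A$ is skew-symmetric with respect to $g$ restricted to $\frak n_1$; combined with the already-assumed $[A,J_1]=0$, this is exactly $A\in\frak{so}(\frak n_1)$ (note $\frak{so}(\frak n_1)\cap\{[\cdot,J_1]=0\}=\frak u(\frak n_1)$, but the statement only asserts $A\in\frak{so}(\frak n_1)$, which is what we get). For the condition $v=0$: I need to track the piece of $d\Omega$ that lives in $\Lambda^2\frak n_1^*\wedge e^1$ or similar — this comes from $(de^j)$ having a component in $e^1\wedge e^{2n}$? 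No: rather, $v$ enters through $de^{2n}$? Let me reconsider — actually $v$ enters the differential of the $\frak n_1$-forms: the dual relation to $[e_{2n},e_1]=ae_1+v$ is that $v$ appears in $de^j$ for $j\in\{2,\dots,2n-1\}$ as a term $-v_j\, e^1\wedge e^{2n}$. Then in $d\Omega_1$ this produces a term proportional to $\sum_j \Omega_{1,j\bullet}\, v_j\, e^1\wedge e^{2n}\wedge e^{\bullet}$, i.e. a $3$-form with an $e^1$, an $e^{2n}$, and one $\frak n_1$-leg. This must vanish, and since $\Omega_1$ is nondegenerate on $\frak n_1$ and $J_1$-invariant, the vanishing of $\iota_v\Omega_1 = g(J_1 v,\cdot)=0$ forces $J_1 v = 0$, hence $v=0$. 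Conversely, if $A\in\frak{so}(\frak n_1)$ and $v=0$, reversing these computations gives $d\Omega=0$.

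The main obstacle is purely bookkeeping: correctly identifying which wedge terms survive and keeping straight where the vector $v$ appears in the Chevalley--Eilenberg differential (in $de^1$ versus in the $de^j$ for $j\ge 2$, depending on how one sets up the duality between $B$ acting on $\frak n$ and its action on $\frak n^*$). Once the structure equations for the dual basis are pinned down, the two resulting conditions — skew-symmetry of $A$ from the $\Lambda^2\frak n_1^*\wedge e^{2n}$ component and $v=0$ from the $e^1\wedge e^{2n}\wedge\frak n_1^*$ component — fall out immediately, and the converse is the same computation read backwards. One should also remark that this is consistent with, and in fact a special case of, the known SKT characterization, since a Kähler metric is SKT with $H=0$.
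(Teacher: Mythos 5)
The paper offers no proof of this statement: it is quoted verbatim from Lauret--Will \cite{LW}, so there is nothing internal to compare against. Judged on its own, your computation is correct and essentially the standard one. The structure equations you end up with are right: $de^{2n}=0$, $de^1$ is proportional to $e^1\wedge e^{2n}$ (so $de^1\wedge e^{2n}=0$), and for $j\in\{2,\dots,2n-1\}$ one has $de^j=v_j\,e^1\wedge e^{2n}+\sum_k A_{jk}\,e^k\wedge e^{2n}$ up to sign conventions; your opening line misplaces the $v$-contribution inside $de^1$ (it cannot appear there, since $v\in\frak n_1$ so $e^1(v)=0$), but you catch and correct this yourself before it is used. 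With $\Omega=e^1\wedge e^{2n}+\Omega_1$ and no cross terms $e^1\wedge e^j$, $e^{2n}\wedge e^j$, the two surviving components of $d\Omega$ are exactly as you say: the $\Lambda^2\frak n_1^*\wedge e^{2n}$ piece gives $\Omega_1(AX,Y)+\Omega_1(X,AY)=0$, which together with $\lbrack A,J_1\rbrack=0$ (hence also $\lbrack A^t,J_1\rbrack=0$) and the invertibility of $J_1$ yields $A+A^t=0$, i.e.\ $A\in\frak{so}(\frak n_1)$; and the $e^1\wedge e^{2n}\wedge\frak n_1^*$ piece is a nonzero multiple of $e^1\wedge e^{2n}\wedge(J_1v)^{\flat}$, whose vanishing forces $J_1v=0$ and hence $v=0$. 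The converse is the same computation read backwards, as you note. The argument is complete; only the bookkeeping of signs and of where $v$ sits in the Chevalley--Eilenberg differential needs to be written more carefully in a final version.
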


\begin{remark} \label{LCBalmostab} Notice that,  by Theorem 2.2 in \cite{P},  a   Hermitian almost abelian Lie algebra  $(\frak g (a,v,A), J, g)$  is LCB if and only if  $A^tv = 0.$ \end{remark}

\begin{teo}[\cite{AL}]
A $2n$-dimensional  Hermitian almost abelian Lie algebra  $(\frak g (a,v,A), J, g)$ is  SKT  if and only if $(aA+A^2+A^t A) \in \frak {so}(\frak n_1)$
 or, equivalently,  if $A$ is normal (i.e $[A,A^t]=[A,J_1]=0$) and each eigenvalue of $A$ has real part equal to $0$ or $-\frac{a}{2}$.
\end{teo}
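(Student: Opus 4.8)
The plan is to do all the bookkeeping in the adapted unitary coframe $\{e^1,\dots,e^{2n}\}$ dual to the basis of $(\frak g,J,g)$ furnished by \cite{AL,LRV}, exploiting that the only nonzero brackets are those in \eqref{Eq: alm-ab-brackets}. First I would record the elementary differential identities: $de^{2n}=0$, $de^1=-a\,e^{2n}\wedge e^1$, and, for every $\xi\in\frak n_1^{*}$,
\[
 d\xi=-e^{2n}\wedge(A^{t}\xi)-\langle\xi,v\rangle\,e^{2n}\wedge e^1,
\]
where $A^{t}$ is the transpose of $A$ with respect to $g_1$ (we use the same letter for the induced map on $\frak n_1^{*}$). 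Writing the fundamental form as $\Omega=e^1\wedge e^{2n}+\Omega_0$, where $\Omega_0=g_1(J_1\cdot,\cdot)$ is the fundamental form of $(\frak n_1,J_1,g_1)$, the $2$-form $e^1\wedge e^{2n}$ is closed, so $d\Omega=d\Omega_0$.

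The algebraic point is that $[A,J_1]=0$ forces $[A^{t},J_1]=0$ as well (since $g_1$ is $J_1$-compatible), so $A$ is complex linear on $(\frak n_1,J_1)$ with Hermitian adjoint $A^{t}$; consequently the symmetric operator $S:=A+A^{t}$ commutes with $J_1$, and for any symmetric $P$ commuting with $J_1$ the form $\Omega_0^{P}:=\Omega_0(P\cdot,\cdot)$ is a genuine $2$-form that vanishes iff $P=0$. Extending $d\xi$ by the Leibniz rule one finds that the derivation $\omega\mapsto\omega(A\cdot,\cdot)+\omega(\cdot,A\cdot)$ sends $\Omega_0$ to $\Omega_0^{S}$, whence $d\Omega=d\Omega_0=-e^{2n}\wedge\bigl(\Omega_0^{S}+e^1\wedge\iota_{v}\Omega_0\bigr)$. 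Applying $H=d^{c}\Omega$, i.e. $H(\cdot,\cdot,\cdot)=-d\Omega(J\cdot,J\cdot,J\cdot)$, and using $e^{2n}\circ J=e^1$, $e^1\circ J=-e^{2n}$, $J(\frak n_1)=\frak n_1$ (so $\Omega_0^{S}$ is $J$-invariant and $\iota_v\Omega_0$ is carried to $v^{\flat}$), gives
\[
 H=e^1\wedge\Omega_0^{S}-e^1\wedge e^{2n}\wedge v^{\flat}.
\]

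Next I would differentiate $H$. The summand $e^1\wedge e^{2n}\wedge v^{\flat}$ is closed, as one checks at once from $de^{2n}=0$ and $de^1=-a\,e^{2n}\wedge e^1$; and in $d(e^1\wedge\Omega_0^{S})$ the term coming from $\iota_{v}\Omega_0^{S}$ drops out because of a repeated $e^1$ factor, so that
\[
 dH=e^1\wedge e^{2n}\wedge\Omega_0^{\,aS+SA+A^{t}S}.
\]
Since $aS+SA+A^{t}S$ is symmetric and commutes with $J_1$, the previous remark gives $dH=0$ if and only if $aS+SA+A^{t}S=0$; and a direct expansion shows $aS+SA+A^{t}S=(aA+A^{2}+A^{t}A)+(aA+A^{2}+A^{t}A)^{t}$, so this condition is exactly $aA+A^{2}+A^{t}A\in\frak{so}(\frak n_1)$.

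For the equivalent reformulation I would set $T:=A-A^{t}$ (skew-symmetric, commuting with $J_1$), so that $A=\tfrac12(S+T)$ and $[A,A^{t}]=-\tfrac12[S,T]$, and rewrite the SKT condition, using $(aA+A^{2}+A^{t}A)+(aA+A^{2}+A^{t}A)^{t}=aS+S^{2}-[A,A^{t}]$, in the form $aS+S^{2}+\tfrac12[S,T]=0$. Diagonalising the self-adjoint operator $S$ in an orthonormal eigenbasis, with real eigenvalues $\mu_j$: the diagonal entries of $aS+S^{2}$ are $\mu_j(\mu_j+a)$, while $[S,T]$ has vanishing diagonal, so the identity forces both $\mu_j(\mu_j+a)=0$ for every $j$ (hence $S^{2}=-aS$) and $[S,T]=0$ (hence $[A,A^{t}]=0$, i.e. $A$ is normal). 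For normal $A$ the eigenvalues of $S=A+A^{t}$ are the numbers $2\,\mathrm{Re}\,\lambda$ with $\lambda$ running over the eigenvalues of $A$, so $S^{2}=-aS$ is equivalent to every eigenvalue of $A$ having real part $0$ or $-\tfrac a2$; the converse implication is immediate. I expect the main obstacle to be keeping the $J$-twisting and the sign/ordering conventions straight in passing from $d\Omega$ to $H$ and then to $dH$, together with recognising the identity $aS+SA+A^{t}S=(aA+A^{2}+A^{t}A)+(aA+A^{2}+A^{t}A)^{t}$; the diagonalisation argument that upgrades "$aA+A^{2}+A^{t}A$ skew" to "$A$ normal with the eigenvalue condition" is short but is the one step that is not purely mechanical.
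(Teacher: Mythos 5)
Your proof is correct. Its first half --- computing $d\Omega$, $H$ and $dH$ in the adapted coframe and reducing $dH=0$ to the vanishing of the symmetric part of $aA+A^{2}+A^{t}A$ --- is in substance the same computation the paper performs (for the more general LCSKT condition) in Proposition \ref{Prop: alm-ab-conditions}, where $H(e_1,Y,Z)=-2g(S(A)J_1Y,Z)$ and $dH(e_{2n},e_1,Y,Z)=2g(S(aA+A^{2}+A^{t}A)J_1Y,Z)$ are obtained by evaluating \eqref{Eq: H-lie-alg} and \eqref{dH} on basis vectors; your identity $aS+SA+A^{t}S=(aA+A^{2}+A^{t}A)+(aA+A^{2}+A^{t}A)^{t}$ is exactly that reduction (up to the harmless factor $2$ between your $S$ and the paper's $S(A)$, and overall sign conventions that do not affect the vanishing of $dH$). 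Where you genuinely diverge is in the second equivalence. The route of \cite{AL}, reproduced in the paper for the LCSKT case in Lemma \ref{Lem: alm-ab-LCSKT} and Theorem \ref{Thm: alm-ab-main}, first extracts the eigenvalue condition by complexifying and pairing against $\overline z$ --- which needs a separate treatment of the degenerate case $\langle\langle z,\overline z\rangle\rangle=0$ --- and then obtains normality from a trace computation combined with the estimate $\|S(E)\|^{2}\geq\sum_i \mathrm{Re}(k_i)^{2}$ of Lemma \ref{Lem: normalop-estimate}. You instead split $A=\tfrac12(S+T)$, rewrite the condition as $aS+S^{2}+\tfrac12[S,T]=0$, and diagonalize $S$: since $[S,T]$ has zero diagonal in an eigenbasis of $S$, the diagonal forces $S^{2}=-aS$ and the off-diagonal part then forces $[S,T]=0$, i.e.\ $[A,A^{t}]=0$. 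This delivers normality and the spectral condition in one stroke, dispenses with both the complexified-eigenvector case analysis and the normal-operator norm estimate, and makes the converse immediate; the trade-off is that your argument reads the spectrum off $S=A+A^{t}$ and only afterwards translates it to $\mathrm{Re}(\lambda)\in\{0,-\tfrac a2\}$ for the eigenvalues of $A$, which is legitimate precisely because normality has already been established at that point.
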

Now, according to  formula   (3.2) in \cite{EFV} (see also  \cite{DF})  for every Lie algebra  $\frak g$ endowed with a Hermitian structure $(J, g)$ the Bismut  torsion 3-form   $H$ is given by
\begin{equation}\label{Eq: H-lie-alg}
 H(X,Y,Z)= -g([JX,JY],Z) -g([JY,JZ],X)- g([JZ,JX],Y).
\end{equation}
and the exterior derivative $dH$ can be computed by the usual formula
\begin{eqnarray}
  dH(W,X,Y,Z) &=& -H([W,X],Y,Z) + H([W,Y],X,Z) - H([W,Z],X,Y) \nonumber \label{dH} \\
   && - H([X,Y],W,Z) + H([X,Z],W,Y) - H([Y,Z],W,X),
\end{eqnarray}
for any vector $X,Y,Z,W \in \frak g$.

\begin{prop}\label{Prop: alm-ab-conditions}
An  almost abelian Lie algebra $\frak g:= \frak g (a,v,A)$ with Hermitian structure $(J, g)$  is LCSKT if and only if there exists a non-zero  closed-$1$-form $\a \in \frak g^*$ such that the following conditions are satisfied
\begin{equation}\label{c1}
    a\a(e_1) + \a(v)= 0, \quad A^t \a\vert_{\frak n_1}=0
\end{equation}
\begin{equation}\label{c2}
    \a(X)g(S(A)J_1Y,Z)- \a(Y)g(S(A)J_1X,Z)+\a(Z)g(S(A)J_1Y,X)= 0
\end{equation}
\begin{equation}\label{c3}
  g(S((a+ \a(e_{2n}))A + A^2 + A^t A)J_1Y,Z) = \frac{1}{2}( g(v,Y)\a(Z) - g(v,Z)\a(Y)),
\end{equation}
for every $X, Y, Z \in \frak n_1$, and where $S(A)=\frac{1}{2}(A+A^t)$ is the symmetric part of $A$.
\end{prop}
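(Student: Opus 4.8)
The plan is to compute $H$ and $dH$ explicitly in the adapted basis $\{e_j\}_{j=1}^{2n}$ using the formulas \eqref{Eq: H-lie-alg} and \eqref{dH}, and then to impose the LCSKT condition $dH = \alpha \wedge H$ coefficient by coefficient. First I would use the commutation relations \eqref{Eq: alm-ab-brackets} together with the structure of $J$ (namely $Je_1 = e_{2n}$, $Je_{2n}=-e_1$, $J_1 = J\vert_{\frak n_1}$) to evaluate $H$ on triples of basis vectors. Since the only non-trivial brackets involve $e_{2n}$, the form $H$ will have components only of the shape $H(e_1, X, e_{2n})$ and $H(X, Y, e_{2n})$ for $X, Y \in \frak n_1$; a short computation expressing $[JX, JY]$ in terms of $A$ and $v$ should give, up to constants, $H(X,Y,e_{2n}) = -g(S(A)J_1 X, Y) + \tfrac12(\cdots)$-type expressions, and $H(e_1, X, e_{2n})$ in terms of $a$, $v$, and $A^t$. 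This is the ``known'' part — it is essentially the content behind the SKT characterization of \cite{AL} — so I would cite those references and record the resulting expression for $H$.

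Next I would split the vanishing of $dH - \alpha \wedge H$ according to which of $e^1$, $e^{2n}$, and the $\frak n_1$-directions appear. Writing $\alpha = \alpha(e_1) e^1 + \alpha\vert_{\frak n_1} + \alpha(e_{2n}) e^{2n}$, the components of $dH - \alpha\wedge H$ fall into a few families: those involving neither $e^1$ nor $e^{2n}$ (purely in $\Lambda^4 \frak n_1^*$), those involving $e^1$ but not $e^{2n}$, those involving $e^{2n}$ but not $e^1$, and those involving both. Because $\frak n$ is abelian, $dH$ itself will only have components containing $e^{2n}$ (every term of $dH$ picks up a bracket, and brackets land in $\operatorname{span}\langle e_1\rangle \oplus \frak n_1$ and need an $e_{2n}$ to be produced). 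So the components of $dH - \alpha \wedge H$ not containing $e^{2n}$ force $\alpha\vert_{\frak n_1} \wedge H = 0$ in the relevant degree, which — after feeding in the closedness of $\alpha$ — should yield the first-order constraints \eqref{c1}, i.e. $a\alpha(e_1) + \alpha(v) = 0$ and $A^t\alpha\vert_{\frak n_1} = 0$. I expect $d\alpha = 0$ to translate (using \eqref{Eq: alm-ab-brackets}) precisely into $a\,\alpha(e_1) + \alpha(v) = 0$ together with $A^t\alpha\vert_{\frak n_1} = 0$, which is why \eqref{c1} is exactly the closedness condition rewritten.

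Then I would read off the remaining equations by pairing $e^{2n}$ with three $\frak n_1$-vectors and with $e^1$ plus two $\frak n_1$-vectors. Matching the coefficient of $e^{2n} \wedge e^X \wedge e^Y \wedge e^Z$ in $dH$ against that in $\alpha \wedge H$, and using the explicit form of $H$, should produce \eqref{c2}: the $S(A)J_1$-terms that survive are exactly $\alpha(X)g(S(A)J_1 Y, Z)$ and its cyclic companions. Matching the coefficient of $e^1 \wedge e^{2n} \wedge e^Y \wedge e^Z$ gives \eqref{c3}, where the ``source'' term $\tfrac12(g(v,Y)\alpha(Z) - g(v,Z)\alpha(Y))$ comes from $\alpha \wedge H$ hitting the $H(e_1, \cdot, e_{2n})$-components, and the left side assembles into $S((a+\alpha(e_{2n}))A + A^2 + A^tA)J_1$ exactly as in the SKT case but with $a$ shifted by $\alpha(e_{2n})$ — this shift is the visible trace of the conformal twist. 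Conversely, given $\alpha$ satisfying \eqref{c1}--\eqref{c3}, the same bookkeeping shows $dH - \alpha\wedge H = 0$, and \eqref{c1} guarantees $d\alpha = 0$, so $(J,g)$ is LCSKT.

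The main obstacle I anticipate is purely organizational rather than conceptual: correctly tracking all index permutations and signs in \eqref{dH} when several of the four slots lie in $\frak n_1$, and verifying that the antisymmetrized combinations genuinely collapse to the symmetric-part expressions $S(A)$, $S((a+\alpha(e_{2n}))A + A^2 + A^tA)$ rather than leaving residual antisymmetric pieces. The commutation $[A, J_1] = 0$ will be used repeatedly to move $J_1$ past $A$ and to see that the antisymmetric part of $A$ contributes nothing to the relevant traces, and the identity $g(A^t J_1 X, Y) = g(J_1 X, AY)$ together with $J_1$-orthogonality will be the workhorse. Once the expression for $H$ is pinned down this is a finite check, so I would present the computation of $H$ and of the four families of coefficients, then state that equating them with $\alpha\wedge H$ yields \eqref{c1}--\eqref{c3}.
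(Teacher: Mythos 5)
Your overall strategy coincides with the paper's: compute $H$ via \eqref{Eq: H-lie-alg} and $dH$ via \eqref{dH} in the adapted basis, recognize that \eqref{c1} is exactly the translation of $d\alpha=0$ through the brackets \eqref{Eq: alm-ab-brackets}, and obtain \eqref{c2} and \eqref{c3} by matching components of $dH-\alpha\wedge H$. However, your component analysis of $H$ contains a concrete error that would derail the execution. Since $H$ is built from brackets of the \emph{$J$-images} of its arguments and $Je_1=e_{2n}$, the essential nonvanishing component is $H(e_1,Y,Z)=-2g(S(A)J_1Y,Z)$ for $Y,Z\in\frak n_1$ --- a component with no $e_{2n}$ slot at all --- whereas $H(X,Y,e_{2n})=0$ for $X,Y\in\frak n_1$, because the three images $J_1X$, $J_1Y$, $Je_{2n}=-e_1$ all lie in the abelian ideal $\frak n$. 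The only other nonzero component is $H(e_{2n},e_1,Z)=-g(v,Z)$, which is where $v$ and $a$ enter.

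This misidentification propagates into your matching of components. Condition \eqref{c2} does \emph{not} come from the coefficient of $e^{2n}\wedge e^X\wedge e^Y\wedge e^Z$: on that component both $dH$ and $\alpha\wedge H$ vanish identically (every $H$-term appearing there is either $H(X,Y,Z)$ or $H(e_{2n},Y,Z)$, both zero), so it yields no information. Instead, \eqref{c2} comes from the component $(e_1,X,Y,Z)$ with all four arguments in the abelian ideal $\frak n$: there $dH$ vanishes because every term of \eqref{dH} carries a bracket of two elements of $\frak n$, so the LCSKT equation reduces to $-\alpha(X)H(e_1,Y,Z)+\alpha(Y)H(e_1,X,Z)-\alpha(Z)H(e_1,X,Y)=0$, which is \eqref{c2}. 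Likewise, your sentence deriving \eqref{c1} from the components of $dH-\alpha\wedge H$ not containing $e^{2n}$ is spurious: \eqref{c1} is nothing but $d\alpha=0$ written out (as you correctly state one sentence later), and the components not containing $e^{2n}$ are precisely the ones giving \eqref{c2}. Your treatment of \eqref{c3} via the $(e_{2n},e_1,Y,Z)$ component is correct and matches the paper. In short, the skeleton of the argument is right, but as written the plan would have you hunting for \eqref{c2} in a component that is identically zero while leaving the genuinely nontrivial component unexamined; the list of nonvanishing components of $H$ must be corrected before the bookkeeping can go through.
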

\begin{proof}
For every $1$-form $\a \in \frak g^*,$ we have
$$d\a (Y,e_{2n}) = - \a ([Y,e_{2n}]) = \a(ad_{e_{2n}}(Y)),  \quad \forall Y \in \frak g.$$
Using   the commutation relations \eqref{Eq: alm-ab-brackets} we get
$$
d\a (e_1,e_{2n}) = \a (ad_{e_{2n}}(e_1))= a \a(e_1)+ \a(v) $$
and $$d\a(X,e_{2n}) = \a (ad_{e_{2n}}(X)) = \a(AX), \quad \forall X \in \frak n_1.$$  By imposing $d\a = 0,$ it  follows that   $a\a(e_1)+\a(v)=0$ and $\a(AX)=0$, for every $X\in \frak n_1$, i.e $A^t\a \vert _{\frak n_1}= 0$.  Therefore, we obtain the condition \eqref{c1}.

Taking into account that $\frak n_1$ is an abelian ideal preserved by $J$ and by   using \eqref{Eq: H-lie-alg} we clearly get
$H(X,Y,Z)= 0,$  whenever $X,Y,Z \in \frak n_1.$ Then, by  \eqref{dH}, one quickly checks that $dH(W,X,Y,Z) = 0,$ if three of the entries lie in $\frak n_1.$\\
Therefore, by imposing
$dH(e_1,X,Y,Z) = (\a \wedge H)(e_1,X,Y,Z)$ we get
\begin{equation*}
     -\a (X) H(e_1,Y,Z) +\a(Y)H(e_1,X,Z) - \a(Z)H(e_1,X,Y) = 0
\end{equation*}
Now, if  we  compute the expression of $ H(e_1, Y, Z),$ for any  $Y,Z \in \frak n_1$, we get
\begin{eqnarray}
 H(e_1, Y, Z) &=& -g([Je_1, JY],Z) -g([JZ,Je_1],Y) \nonumber \\
   &=& -g([e_{2n}, J_1Y],Z)  + g([e_{2n},J_1Z],Y)  \nonumber \\
   &=& -g(A JY,Z) + g(AJZ,Y) \nonumber \\
   &=& -g(A J_1Y,Z) - g(J_1A^tY,Z) = -2 g(S(A)J_1Y,Z) \label{Eq: H(ei,Y,Z)}
\end{eqnarray}
In the last step of the computation above,  we  used   that $J_1$ commutes with $A^t$  since $J_1$ commutes with $A$ and $J_1^t=-J_1$.
Applying \eqref{Eq: H(ei,Y,Z)} in all summands of \eqref{Eq: alm-ab-brackets}, we obtain   the condition \eqref{c2}.
Moreover, if we use the formula \eqref{Eq: H-lie-alg} and the commutation relations \eqref{Eq: alm-ab-brackets} again, we get
$$H(e_{2n},Y,Z)= 0, \quad H(e_{2n}, e_1, Z) = -g(v,Z), \quad \forall Y,Z \in \frak n_1. $$
Similar computations show that
$$dH(e_{2n}, e_1, Y, Z) = 2g(S(aA + A^2 + A^t A)JY,Z)$$
and
\begin{eqnarray*}
 (\a \wedge H)(e_{2n},e_1,Y,Z) &=& \a(e_{2n})H(e_1,Y,Z)-\a(e_1)H(e_{2n},Y,Z) \\
  &+&  \a(Y)H(e_{2n},e_1,Z) - \a(Z)H(e_{2n},e_1,Y)  \\
   &=& -2 \a(e_{2n})g(S(A)J_1Y,Z) - \a(Y)g(v,Z) + \a(Z)g(v,Y).
\end{eqnarray*}
By imposing $$dH(e_{2n},e_1, Y,Z) = (\a \wedge H)(e_{2n},e_1, Y,Z)$$
we get the condition \eqref{c3}\\

 $g(S((a+\a(e_{2n}))A + A^2 + A^tA)J_1Y,Z) = \frac{1}{2}(g(v,Y)\a(Z) - g(v,Z)\a(Y)).$
\end{proof}
With respect the dual basis   $\{e^i \}_{i=1}^{2n}$  of the  adapted   basis  $\{e_i\}_{i=1}^{2n}$ of $\frak g$   we can write the $1$-form  $\a \in \frak g^*$ as
\begin{equation*}
    \a = \sum_{i=1}^{2n}\lambda_i e^i =  \lambda_1 e^1 +\sum_{k=2}^{2n-1}\lambda_ke^k + \lambda_{2n}e^{2n},
\end{equation*}
where $\lambda_i = \a (e_i).$\\

\begin{remark} Note  that the  expressions  of the Chern- and Bismut-Ricci forms   were  obtained respectively in \cite[Lemma 6.1]{LRV} and \cite[Prop. 4.8]{AL} (see also \cite{FP} for a correction in the expression of $\rho^B$)  and are given by
\begin{equation*}
    \rho^C = -(a^2 + \frac{1}{2}a \,  Tr A)e^1 \wedge e^{2n},
\end{equation*}
\begin{equation*}
    \rho^B = -(a^2 - \frac{1}{2}a \, TrA + \|v\|^2)e^1 \wedge e^{2n} - (A^t v)^{\flat} \wedge e^{2n},
\end{equation*}
where $X^{\flat} (.):= g(X,.)$, for $X \in \frak g$.  Therefore, by  \cite{FP20} it follows  that  $\rho^C = \rho^B$ if  and only if
$$
(a \, TrA -|v|^2)e^1 \wedge e^{2n}  - (A^t v)^\flat \wedge e^{2n} =0.
$$
\end{remark}

We can prove the following result.
\begin{prop}
 Let $(J,g)$ be a Hermitian structure on a $2n$-dimensional almost abelian Lie  algebra $\frak g$.  If $(J,g)$ is balanced and LCSKT, then it is necessarily K\"{a}hler.
\end{prop}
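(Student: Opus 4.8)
The plan is to intersect the two structure characterizations. Proposition~\ref{Prop: alm-ab-conditions} tells us what LCSKT means for $\frak g(a,v,A)$ in terms of $(a,v,A)$ and the closed $1$-form $\alpha$, while the balanced condition pins down $(a,v,A)$ quite rigidly; the two together will force $A$ to be skew-symmetric, whence Kähler by the result of \cite{LW} recalled above. First I would record the balanced condition for almost abelian Lie algebras. With respect to the adapted basis $\{e_i\}$ one has $\Omega = e^1\wedge e^{2n} + \omega_1$, where $\omega_1\in\Lambda^2\frak n_1^*$ is the standard Hermitian form of $(\frak n_1,J_1)$, and \eqref{Eq: alm-ab-brackets} gives $de^{2n}=0$, $de^1 = a\,e^1\wedge e^{2n}$, and each $de^k$ ($2\le k\le 2n-1$) divisible by $e^{2n}$. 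From this one checks that $e^1\wedge e^{2n}\wedge\omega_1^{n-2}$ is closed, so $d\Omega^{n-1}=d\omega_1^{n-1}$; since $\omega_1^{n-1}$ is a nonzero multiple of the volume form $e^2\wedge\cdots\wedge e^{2n-1}$ of $\frak n_1$, a direct computation gives $d\omega_1^{n-1} = \pm(\operatorname{tr}A)\,e^{2}\wedge\cdots\wedge e^{2n-1}\wedge e^{2n} + (\text{terms linear in the components of }v)$, with the displayed summands linearly independent. Using $d\Omega^{n-1}=(n-1)\theta\wedge\Omega^{n-1}$, this shows that $(J,g)$ is balanced if and only if $v=0$ and $\operatorname{tr}A=0$ (see also \cite{P}).

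Now assume $(J,g)$ is balanced and LCSKT. Then $v=0$, $\operatorname{tr}A=0$, and there is a nonzero closed $1$-form $\alpha$ satisfying \eqref{c1}--\eqref{c3}. Set $b:=a+\alpha(e_{2n})$. Since $v=0$, condition \eqref{c3} reads $g\big(S(bA+A^2+A^tA)J_1Y,Z\big)=0$ for all $Y,Z\in\frak n_1$, and as $J_1$ is invertible this forces $S(bA+A^2+A^tA)=0$. Taking the trace and using $\operatorname{tr}(S(M))=\operatorname{tr}(M)$, $\operatorname{tr}(A^tA)=\|A\|^2$, and the identity $\operatorname{tr}(A^2)+\|A\|^2=\tfrac12\|A+A^t\|^2$, we obtain $b\operatorname{tr}A + \tfrac12\|A+A^t\|^2 = 0$; the balanced hypothesis $\operatorname{tr}A=0$ then yields $\|A+A^t\|^2=0$, i.e.\ $A\in\frak{so}(\frak n_1)$. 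Together with $v=0$, the characterization of the Kähler condition for almost abelian Lie algebras (\cite{LW}) gives that $(J,g)$ is Kähler.

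The only step needing some care is the balanced characterization $v=0,\ \operatorname{tr}A=0$ — the sign bookkeeping in $d\omega_1^{n-1}$ — but it is routine (and can be quoted from \cite{P}); everything else is the one-line trace computation applied to \eqref{c3}. I would also note that conditions \eqref{c1} and \eqref{c2} are not needed and become trivially true once $v=0$ and $A$ is skew-symmetric, consistent with the fact that a Kähler almost abelian structure is (degenerately) LCSKT; for $n=2$ the statement is anyway immediate, since a balanced metric in complex dimension $2$ is Kähler.
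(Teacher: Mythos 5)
Your proof is correct and follows essentially the same route as the paper: characterize the balanced condition as $v=0$ and $\operatorname{tr}A=0$ (the paper quotes the Lee form $\theta=-(\operatorname{tr}A)e^{2n}+(Jv)^{\flat}$ from \cite{FP20} rather than recomputing $d\Omega^{n-1}$), then take the trace of the reduced condition \eqref{c3} to force $S(A)=0$ and conclude via \cite{LW}. The only cosmetic difference is that the paper writes the trace identity as $(a+\lambda_{2n})\operatorname{tr}A+2\operatorname{tr}(S(A)^2)=0$, which is the same as your $b\operatorname{tr}A+\tfrac12\|A+A^t\|^2=0$.
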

\begin{proof}
The Lee form $\theta$ for $2n$-dimensional almost abelian Lie algebra $\frak g$, is given by
\begin{equation}\label{Eq: Lee}
    \theta = -(tr A) e^{2n} + (Jv)^{\flat}
\end{equation}
(see  \cite[Lemma 2.1]{FP20}).
By \cite[Th. 3.1]{FP20} we get  that $(J, g)$ is  balanced  $(\theta=0)$ if and only if $v=0$ and $tr A=0$. If $(J,g)$ is also LCSKT,  the  condition \eqref{c3} reduces to $$S((a+\lambda_{2n}))A+A^2+A^t A))=0.$$
Taking traces in this last equation,
 yields to $(a+\lambda_{2n})tr A + 2tr S(A)^2=0$ so $tr S(A)^2=0$, which implies that $S(A)=0$. In this case $v=0,$ $S(A)=0$ the Hermitian structure $(J,g)$  has to be  K\"{a}hler (\cite{LW}).
\end{proof}

We will now study the  equations  in  Proposition \ref{Prop: alm-ab-conditions} by  imposing  the additional assumption  that $A$  is non-degenerate, i.e. $detA \neq 0$. We will see that we obtain similiar results to those in \cite{AL} for SKT structures.

\begin{lemm}
If $ \frak g(\mu(a,v,A),J,g)$ is a $2n$-dimensional almost abelian  Hermitian  Lie algebra with $det A \neq 0$, then the Hermitian structure  $(J,g)$ is LCSKT with closed 1-form $\alpha$ if and only if
\begin{equation}\label{Eq: alm-ab-alpha}
    \a \vert_{\frak n_1} = 0, \quad a\a(e_1)=0
\end{equation}
and
\begin{equation}\label{Eq: alm-ab-matrix}
    ((a+\a(e_{2n}))A+A^2+A^t A) \in \mathfrak{so}(\frak n_1).
\end{equation}
\end{lemm}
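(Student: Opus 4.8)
The plan is to specialize the three conditions from Proposition~\ref{Prop: alm-ab-conditions} to the case $\det A \neq 0$ and show that the hypothesis $\det A\neq 0$ forces $\alpha|_{\frak n_1}=0$, after which condition \eqref{c3} collapses to \eqref{Eq: alm-ab-matrix}. First I would handle \eqref{c1}: the second part reads $A^t\alpha|_{\frak n_1}=0$, i.e.\ the vector $(\alpha|_{\frak n_1})^\sharp$ lies in $\ker A^t$; since $\det A\neq 0$ implies $\det A^t\neq 0$, this kernel is trivial, so $\alpha|_{\frak n_1}=0$. The remaining part of \eqref{c1} is then literally $a\alpha(e_1)+\alpha(v)=0$, and because $\alpha$ vanishes on $\frak n_1$ and $v\in\frak n_1$ we have $\alpha(v)=0$, leaving $a\alpha(e_1)=0$. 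This yields \eqref{Eq: alm-ab-alpha}.

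Next I would check that \eqref{c2} becomes vacuous. Condition \eqref{c2} involves $\alpha(X),\alpha(Y),\alpha(Z)$ for $X,Y,Z\in\frak n_1$, and we have just shown these all vanish; hence \eqref{c2} holds automatically and imposes no constraint. Finally, for \eqref{c3}: the right-hand side is $\tfrac12(g(v,Y)\alpha(Z)-g(v,Z)\alpha(Y))$, and since $Y,Z\in\frak n_1$ we again have $\alpha(Y)=\alpha(Z)=0$, so the right-hand side is zero. Thus \eqref{c3} reduces to
\[
g\bigl(S((a+\alpha(e_{2n}))A+A^2+A^tA)J_1Y,Z\bigr)=0 \quad\text{for all } Y,Z\in\frak n_1 .
\]
Since $J_1$ is an isomorphism of $\frak n_1$ and $g$ is nondegenerate, this is equivalent to the symmetric operator $S((a+\alpha(e_{2n}))A+A^2+A^tA)$ being identically zero on $\frak n_1$; writing $\alpha(e_{2n})=\lambda_{2n}$ this says exactly that $(a+\alpha(e_{2n}))A+A^2+A^tA$ is skew-symmetric, i.e.\ lies in $\mathfrak{so}(\frak n_1)$, which is \eqref{Eq: alm-ab-matrix}. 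Conversely, if \eqref{Eq: alm-ab-alpha} and \eqref{Eq: alm-ab-matrix} hold, one reads the implications backwards: \eqref{Eq: alm-ab-alpha} gives $\alpha|_{\frak n_1}=0$ hence \eqref{c1} (using $v\in\frak n_1$) and \eqref{c2}, while \eqref{Eq: alm-ab-matrix} together with $\alpha|_{\frak n_1}=0$ gives \eqref{c3}; by Proposition~\ref{Prop: alm-ab-conditions} the structure is then LCSKT with this closed $\alpha$, provided $\alpha\neq 0$ — and one should note $\alpha\neq 0$ is compatible with \eqref{Eq: alm-ab-alpha} precisely when $\lambda_{2n}\neq 0$ or ($a=0$ and $\lambda_1\neq 0$), but since the statement only asserts the equivalence of the \emph{conditions} on a closed $1$-form $\alpha$, this subtlety need not enter the proof.

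I do not expect a serious obstacle here: the whole argument is a direct substitution into Proposition~\ref{Prop: alm-ab-conditions} exploiting $\ker A^t=\{0\}$. The one point that deserves care is the equivalence ``$g(S(M)J_1Y,Z)=0$ for all $Y,Z\in\frak n_1$'' $\iff$ ``$M\in\mathfrak{so}(\frak n_1)$'': one must use that $g$ is an inner product, that $J_1$ is invertible on $\frak n_1$, and that $S(M)$ is self-adjoint, to conclude $S(M)=0$, i.e.\ $M+M^t=0$. I would state this as a one-line remark rather than belabor it. No other step requires more than rewriting the conditions of the preceding proposition.
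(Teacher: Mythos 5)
Your proposal is correct and follows essentially the same route as the paper: specialize the three conditions of Proposition~\ref{Prop: alm-ab-conditions}, use $\det A^t=\det A\neq 0$ to kill $\alpha|_{\frak n_1}$, observe that \eqref{c2} and the right-hand side of \eqref{c3} then vanish, and conclude $S((a+\alpha(e_{2n}))A+A^2+A^tA)=0$ from the nondegeneracy of $g$ and invertibility of $J_1$. The only difference is that you spell out a few justifications (the kernel argument, the $J_1$-invertibility step, and the remark on when $\alpha\neq 0$) that the paper leaves implicit.
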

\begin{proof}
If $det A \neq 0$ by the second condition of \eqref{c1} we obtain $\a \vert_{\frak n_1} = 0,$ and thus the first condition becomes $a \a (e_1) = 0.$
Consequently, \eqref{c2} is trivially satisfied and \eqref{c3} is reduced to
$g(S((a+\a(e_{2n}))A+A^2+A^t A))J_1Y,Z)=0$ for any $Y,Z \in \frak n_1$
which in turn implies that
$S((a+\a(e_{2n}))A+A^2+A^t A))=0$ and so $((a+\a(e_{2n} ))A+A^2+A^t A)\in \frak so(\frak n_1).$
\end{proof}

\begin{lemm}\label{Lem: alm-ab-LCSKT}
Let $\frak g(\mu(a,v,A),J,g)$ be an almost abelian Hermitian Lie algebra such that $detA \neq 0$.  If  $(J,g)$ is LCSKT  with closed 1-form $\alpha$, then the real part of the eigenvalues of $A$ are either
$0$ or $-\frac{1}{2}(a+\lambda_{2n}),$
where $\lambda_{2n}=\a(e_{2n}).$
\end{lemm}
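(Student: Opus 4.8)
The plan is to feed the matrix condition supplied by the previous lemma into the complex structure that $J_1$ puts on $\frak n_1$. Since $(J,g)$ is LCSKT with closed $1$-form $\alpha$ and $\det A\neq 0$, the previous lemma yields $\alpha\vert_{\frak n_1}=0$, $a\alpha(e_1)=0$ and, writing $b:=a+\lambda_{2n}$ with $\lambda_{2n}=\alpha(e_{2n})\in\R$ (real, since $\alpha$ is a real $1$-form),
\[
M:=bA+A^2+A^tA\in\mathfrak{so}(\frak n_1).
\]
The first step is to reinterpret this skew-symmetry. Because $[A,J_1]=0$, the triple $(\frak n_1,J_1,g\vert_{\frak n_1})$ is a Hermitian vector space (of complex dimension $n-1\ge 1$), $A$ is $\C$-linear, and its $g$-transpose $A^t$ coincides with the Hermitian adjoint $A^*$: for a $\C$-linear endomorphism the real adjoint is automatically $\C$-linear and agrees with the Hermitian one. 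In particular $M$ is $\C$-linear, and $M^t=-M$ then forces $M$ to be skew-Hermitian, i.e. $M+M^*=0$.

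The second step is to put $M+M^*=0$ in usable form. Using $b\in\R$, $(A^2)^*=(A^t)^2$ and $(A^tA)^*=A^tA$, and setting $H:=A+A^t=2S(A)$ (a Hermitian operator), one expands $M+M^*$ and, via the identity $A^2+(A^t)^2+2A^tA=H^2-[A,A^t]$, obtains
\[
[A,A^t]=bH+H^2,
\]
both sides being Hermitian, so the equation is self-consistent. The final step is to test this on an eigenvector: let $\lambda\in\C$ be an eigenvalue of the $\C$-linear map $A$ (one exists since $\frak n_1\neq\{0\}$) with eigenvector $v\neq 0$, and pair both sides with $v$ in the Hermitian inner product $\langle\cdot,\cdot\rangle$ of $\frak n_1$. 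A short computation gives $\langle[A,A^t]v,v\rangle=\|A^tv\|^2-|\lambda|^2\|v\|^2$, $\langle Hv,v\rangle=2\,\mathrm{Re}(\lambda)\|v\|^2$, and, from $Hv=\lambda v+A^tv$, $\|Hv\|^2=\big(|\lambda|^2+2\,\mathrm{Re}(\lambda^2)\big)\|v\|^2+\|A^tv\|^2$. The crucial point is that the uncontrolled term $\|A^tv\|^2$ cancels between the two sides; using $|\lambda|^2+\mathrm{Re}(\lambda^2)=2\,\mathrm{Re}(\lambda)^2$, the identity collapses to
\[
0=\mathrm{Re}(\lambda)\big(b+2\,\mathrm{Re}(\lambda)\big),
\]
hence $\mathrm{Re}(\lambda)\in\{0,-\tfrac12 b\}=\{0,-\tfrac12(a+\lambda_{2n})\}$, as claimed.

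I expect the only genuinely delicate point to be the identification $A^t=A^*$ and the resulting upgrade of "$M$ is $g$-skew-symmetric" to "$M$ is skew-Hermitian"; once that is secured, everything else is a short, self-cancelling inner-product computation entirely parallel to the SKT case in \cite{AL}. It is also worth noting that the hypothesis $\det A\neq 0$ enters only through the previous lemma (to force $\alpha\vert_{\frak n_1}=0$ and to remove the $v$-dependent terms of Proposition \ref{Prop: alm-ab-conditions}), and plays no role in the eigenvalue argument itself.
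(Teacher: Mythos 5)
Your argument is correct, but it takes a genuinely different route from the paper's. The paper (following \cite[Lemma 4.8]{AL}) works on the real complexification $\frak n_1^{\C}=\frak n_1\otimes_{\R}\C$: it pairs $\bigl((a+\lambda_{2n})A+A^2+A^tA\bigr)z$ against $\overline z$ for an eigenvector $z=x+iy$, obtains $k(a+\lambda_{2n}+2k)\langle\langle z,\overline z\rangle\rangle=0$, and then needs a second, separate computation in the degenerate case $\langle\langle z,\overline z\rangle\rangle=0$ (where $\|x\|=\|y\|$, $x\perp y$), evaluating $\langle Mx,x\rangle=0$ directly to get $m(a+\lambda_{2n}+2m)\|x\|^2=0$. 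You instead exploit the complex structure $J_1$ already present on $\frak n_1$: since $[A,J_1]=0$, the real transpose is the Hermitian adjoint, the skew-symmetry of $M$ becomes the operator identity $[A,A^t]=bH+H^2$ with $H=A+A^t$, and a single eigenvector test (with the $\|A^tv\|^2$ terms cancelling) gives the conclusion with no case split. Your route is cleaner in that respect, at the price of using the hypothesis $[A,J_1]=0$, which the paper's argument never needs; the paper's version would apply to any real $A$ with $M\in\mathfrak{so}(\frak n_1)$. One small point you should make explicit: the lemma concerns the eigenvalues of $A$ as a real endomorphism, whereas you produce the eigenvalues of $A$ as a $J_1$-complex-linear map; since the former are exactly the latter together with their complex conjugates (as $A$ preserves the $\pm i$-eigenspaces of $J_1$ in $\frak n_1^{\C}$), the real parts coincide and the stated conclusion follows, but this identification deserves a sentence.
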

\begin{proof}
We will proceed in a similar way as  in  the proof of \cite[Lemma 4.8]{AL}.

Consider the complexification $\frak n_1^\C$ of $\frak n_1$ and extend $A$ linearly to $\frak n_1^\C$.
Let $k \in \C$ be an eigenvalue of $A$ corresponding to the eigenvector
 $z=x+\sqrt{-1}y =x+iy \in \frak n_1^{\C},x,y \in \frak n_1.$ Since  $((a+\lambda_{2n})A+A^2+A^t A) \in \mathfrak{so}(\frak n_1)$, we have
$$\langle\langle((a+\lambda_{2n})A + A^2+A^t A)z,\overline{z}\rangle\rangle = 0 =((a+\lambda_{2n})k+2k^2)\langle\langle z,\overline{z}\rangle\rangle =k(a+\lambda_{2n} +2k)\langle\langle z,\overline{z}\rangle\rangle, $$
where $\langle\langle.,.\rangle\rangle$ is the Hermitian inner product on $\frak n_1^{\C}$ induced by a fixed inner product  $g=\langle.,.\rangle$ on $\frak n_1$.
We have two cases to consider. Either
 $k=0$,$ -\frac{1}{2}(a+\lambda_{2n})$ (and the lemma follows) or
 $k \neq 0,$ $-\frac{1}{2}(a+\lambda_{2n}),$ and in this case $\langle\langle z, \overline{z}\rangle\rangle = 0$ which implies that $\|x\|= \|y\|$ and $\langle x, y\rangle =0.$

Writing $k = m + \sqrt{-1} n = m+in$, with $m,n\in \R$, and using that $Ax = mx - ny$ and $Ay = nx + my,$ we get
\begin{eqnarray*}
 0 &= & \langle ((a+\lambda_{2n})A+A^2+A^tA)u,u\rangle\\ &=&  [ (a+\lambda_{2n})m + m^{2}-n^{2} + m^{2} + n^{2})] \|x\|^2 \\
   &=& m(a+\lambda_{2n} + 2m) \|x\|^2.
\end{eqnarray*}
Then we can conclude that $m = 0$ or $m = -\frac{1}{2}(a+\lambda_{2n})$ from which the lemma follows.

\end{proof}
In order to establish  our main result in this section, we need to recall the following linear algebra estimate,
\cite[Corollary A2]{AL}. Consider the norm in $gl_m(\R)$, defined by the equality $\| B\|^2 = tr (BB^t).$

\begin{lemm}\label{Lem: normalop-estimate}
For any $E \in gl_m (\R)$ with eigenvalues $k_1,...,k_m \in \C$, we have that
$$\|S(E)\|^2 \geq \sum_{i=1}^m Re(k_i)^2$$
with equality if and only if $E$ is normal.  \end{lemm}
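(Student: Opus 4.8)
The plan is to prove the Cauchy–Schwarz–type inequality $\|S(E)\|^2 \geq \sum_{i=1}^m \mathrm{Re}(k_i)^2$ by reducing to the upper-triangular (Schur) form of $E$ and comparing diagonal contributions. First I would apply the real Schur decomposition, or rather its complex analogue: there is a unitary matrix $U$ such that $U^* E U = T$ is upper triangular with the eigenvalues $k_1,\dots,k_m$ on the diagonal. Since the Frobenius norm is unitarily invariant, $\|E\|^2 = \|T\|^2 = \sum_i |k_i|^2 + \sum_{i<j} |t_{ij}|^2$, where the $t_{ij}$ are the strictly-upper entries of $T$. The key point is that conjugation by a unitary also respects the adjoint, so $S(E) = \tfrac12(E + E^t)$ is unitarily conjugate to $\tfrac12(T + T^*)$, and hence $\|S(E)\|^2 = \tfrac14 \|T + T^*\|^2$.

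Next I would compute $\|T + T^*\|^2$ entrywise. The diagonal entries of $T+T^*$ are $2\,\mathrm{Re}(k_i)$, contributing $\sum_i 4\,\mathrm{Re}(k_i)^2$; the off-diagonal part consists, for each pair $i<j$, of the entry $t_{ij}$ above the diagonal and $\overline{t_{ij}}$ below, contributing $2|t_{ij}|^2$ per pair. Hence
\begin{equation*}
\|S(E)\|^2 = \frac14\left( \sum_{i=1}^m 4\,\mathrm{Re}(k_i)^2 + 2\sum_{i<j} |t_{ij}|^2 \right) = \sum_{i=1}^m \mathrm{Re}(k_i)^2 + \frac12 \sum_{i<j} |t_{ij}|^2 \geq \sum_{i=1}^m \mathrm{Re}(k_i)^2,
\end{equation*}
which gives the desired inequality immediately, since the off-diagonal sum is non-negative.

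For the equality case, the computation above shows that equality holds if and only if $t_{ij} = 0$ for all $i < j$, i.e. $T$ is diagonal. A matrix is unitarily conjugate to a diagonal matrix precisely when it is normal, so equality holds if and only if $E$ is normal. One subtlety to handle carefully is that $T$ may be a complex matrix even though $E$ is real; this is harmless because we only use the Frobenius norm and the adjoint, both of which are defined on complex matrices and restrict correctly, and the statement ``$E$ normal'' ($EE^t = E^tE$) is equivalent to ``$E$, viewed in $gl_m(\C)$, commutes with its conjugate transpose.'' The main (and only real) obstacle is being precise about this passage between the real and complex pictures; everything else is the routine entrywise bookkeeping sketched above. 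Alternatively, one could cite \cite[Corollary A2]{AL} directly, since the statement is taken verbatim from there, but the self-contained Schur-form argument is short enough to include.
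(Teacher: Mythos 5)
Your proof is correct. Note first that the paper does not actually prove this lemma: it is stated as a recalled linear-algebra estimate with a bare citation to \cite[Corollary A2]{AL}, which is precisely the fallback you mention in your last sentence, so your Schur-form argument supplies a proof the paper leaves to a reference rather than reproducing its route. The argument itself is sound: since $E$ is real, $U^*E^tU=(U^*EU)^*=T^*$ for any unitary $U$ triangularizing $E$, hence $\|S(E)\|^2=\tfrac14\|T+T^*\|^2=\sum_{i}\mathrm{Re}(k_i)^2+\tfrac12\sum_{i<j}|t_{ij}|^2$, and both the inequality and the equality case (the strictly upper part of $T$ vanishes, i.e. $E$ is unitarily diagonalizable, i.e. normal) follow. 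Two small points you could make fully explicit: (i) the paper's norm $\|B\|^2=\mathrm{tr}(BB^t)$ coincides on real matrices with the Frobenius norm $\mathrm{tr}(BB^*)$ you use, and the latter is invariant under complex unitary conjugation, which is what justifies working in $gl_m(\C)$; (ii) the defect $\tfrac12\sum_{i<j}|t_{ij}|^2$ equals $\|S(E)\|^2-\sum_i \mathrm{Re}(k_i)^2$ and is therefore independent of the chosen Schur decomposition, so the implication ``equality $\Rightarrow T$ diagonal'' does not depend on which triangularization is picked (equivalently, one can invoke the standard fact that a normal triangular matrix is diagonal). Neither point is a gap, just bookkeeping worth writing down.
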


\begin{teo}\label{Thm: alm-ab-main}
Let $\frak g:=  \frak g(\mu(a,v,A),J,g)$  be  a Hermitian almost abelian Lie algebra with $det A \neq 0.$ The Hermitian  structure $(J, g)$  is LCSKT with closed 1-form $\alpha$ if and only if
$[A,A^t] = [A,J_1] = 0$ and each eigenvalue of $A$ has a real part equal to $0$ or $ -\frac{1}{2}(a+\lambda_{2n})$, where $\lambda_{2n}=\alpha(e_{2n})$.
\end{teo}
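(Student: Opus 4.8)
The plan is to combine the two lemmas just proved (Lemma \ref{Lem: alm-ab-LCSKT} and its predecessor on the form of $\alpha$) with the linear-algebra estimate in Lemma \ref{Lem: normalop-estimate}, following the strategy of \cite[Lemma 4.8]{AL}. For the forward implication, assume $(J,g)$ is LCSKT with closed $\alpha$ and set $\lambda_{2n} = \alpha(e_{2n})$. By the lemma preceding Lemma \ref{Lem: alm-ab-LCSKT}, the matrix $M := (a+\lambda_{2n})A + A^2 + A^tA$ lies in $\mathfrak{so}(\frak n_1)$, i.e. $S(M) = 0$. I would take traces of $S(M)=0$ to get $(a+\lambda_{2n})\,\mathrm{tr}\,A + 2\,\mathrm{tr}(S(A)^2) = \ldots$; more usefully, I expand $S(M) = (a+\lambda_{2n})S(A) + S(A^2 + A^tA)$ and note $S(A^2+A^tA) = \frac12(A+A^t)^2 = 2S(A)^2$ (since $A^2 + A^tA + (A^t)^2 + AA^t = (A+A^t)^2$ — wait, one must be careful: $S(A^2+A^tA) = \frac12(A^2 + A^tA + (A^2)^t + (A^tA)^t) = \frac12(A^2 + (A^t)^2 + A^tA + AA^t) = 2S(A)^2$). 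Hence $S(M) = 0$ becomes $2S(A)^2 + (a+\lambda_{2n})S(A) = 0$. Taking the trace gives $2\,\mathrm{tr}(S(A)^2) + (a+\lambda_{2n})\,\mathrm{tr}(S(A)) = 0$; but $\mathrm{tr}(S(A)) = \mathrm{tr}(A) = \sum \mathrm{Re}(k_i)$ and, by Lemma \ref{Lem: alm-ab-LCSKT}, each $\mathrm{Re}(k_i) \in \{0, -\frac12(a+\lambda_{2n})\}$.

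The decisive step is the norm estimate. From Lemma \ref{Lem: normalop-estimate} applied to $E = A$, we have $\|S(A)\|^2 \geq \sum_i \mathrm{Re}(k_i)^2$, with equality iff $A$ is normal. I would show that the relation $2S(A)^2 = -(a+\lambda_{2n})S(A)$ together with the eigenvalue dichotomy from Lemma \ref{Lem: alm-ab-LCSKT} forces equality in the estimate. Concretely, from $2S(A)^2 + (a+\lambda_{2n})S(A)=0$ one computes $\|S(A)\|^2 = \mathrm{tr}(S(A)^2) = -\frac{a+\lambda_{2n}}{2}\,\mathrm{tr}(S(A)) = -\frac{a+\lambda_{2n}}{2}\sum_i \mathrm{Re}(k_i)$. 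Now partition the eigenvalues: if $p$ of them have real part $-\frac12(a+\lambda_{2n})$ and the rest have real part $0$, then $\sum_i \mathrm{Re}(k_i) = -\frac{p}{2}(a+\lambda_{2n})$ and $\sum_i \mathrm{Re}(k_i)^2 = \frac{p}{4}(a+\lambda_{2n})^2$, so $\|S(A)\|^2 = \frac{p}{4}(a+\lambda_{2n})^2 = \sum_i \mathrm{Re}(k_i)^2$. Equality in Lemma \ref{Lem: normalop-estimate} then yields $[A,A^t]=0$; and $[A,J_1]=0$ holds by the structure of $B$ from \cite{LRV} recalled in the setup. This proves the forward direction.

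For the converse, assume $[A,A^t] = [A,J_1] = 0$ and each eigenvalue has real part $0$ or $-\frac12(a+\lambda_{2n})$ for some real number $\lambda_{2n}$; I must produce the closed $1$-form $\alpha$. Since $\det A \neq 0$, set $\alpha := \lambda_{2n} e^{2n}$ (so $\alpha\vert_{\frak n_1} = 0$ and $\alpha(e_1)=0$); this satisfies the closedness conditions \eqref{Eq: alm-ab-alpha} automatically, as $d\alpha(X,e_{2n}) = \alpha(AX) = 0$ for $X \in \frak n_1$ and $d\alpha(e_1,e_{2n}) = a\alpha(e_1)+\alpha(v) = 0$. It remains to verify \eqref{Eq: alm-ab-matrix}, i.e. $M = (a+\lambda_{2n})A + A^2 + A^tA \in \mathfrak{so}(\frak n_1)$. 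Because $A$ is normal and commutes with $J_1$, one can simultaneously bring $A$ (over $\C$) to a diagonal form compatible with the complex structure; on each eigenline for $k$ one has $A^tA$ acting as $|k|^2$ and $A^2 + (a+\lambda_{2n})A$ acting as $k^2 + (a+\lambda_{2n})k = k(k + a + \lambda_{2n})$, and when $\mathrm{Re}(k) \in \{0, -\frac12(a+\lambda_{2n})\}$ a direct check shows $\mathrm{Re}(k(k+a+\lambda_{2n}) + |k|^2) = 0$, so the symmetric part of $M$ vanishes. The main obstacle I anticipate is handling the generalized eigenspace / Jordan-block structure carefully in the forward direction — one must ensure that the equality case of Lemma \ref{Lem: normalop-estimate} is genuinely attained rather than merely bounded, which is exactly where the precise computation of $\|S(A)\|^2$ via the trace identity above is needed; once normality is in hand, the converse computation is routine since everything diagonalizes. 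One should also remark (as in \cite{AL}) that the case $a+\lambda_{2n}=0$ simply forces all eigenvalues purely imaginary and $A$ normal, with $\alpha$ still nontrivial as long as $\lambda_{2n}\neq 0$, which it is whenever $a \neq 0$; and if $a = 0$ too, nontriviality of $\alpha$ must be ensured by the hypothesis that an LCSKT structure is being claimed, i.e. $\alpha \neq 0$.
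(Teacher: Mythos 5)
Your proposal is correct and follows essentially the same route as the paper's proof: the forward direction obtains normality of $A$ from the equality case of Lemma \ref{Lem: normalop-estimate} via a trace computation (the paper takes traces directly in \eqref{Eq: alm-ab-matrix}, while you first rewrite the symmetric part as $2S(A)^2+(a+\lambda_{2n})S(A)$, which amounts to the same identity), and the converse verifies skew-symmetry of $(a+\lambda_{2n})A+A^2+A^tA$ by diagonalizing the normal matrix $A$ exactly as the paper does. No substantive differences.
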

\begin{proof}
The condition $[A,J_1] = 0$ is automatically satisfied since $J$ is integrable on $\frak g$.

Suppose that $(J,g)$ is LCSKT with $A$ non-degenerate. From  Lemma \ref{Lem: alm-ab-LCSKT},  the eigenvalues of $A$, $k_1,...,k_{2n-2} $, come in pairs  and can be rearranged such that
$$Re(k_1)= ... = Re(k_{2l}) = -\frac{1}{2}(a+\lambda_{2n}),\quad Re(k_{2l+1})= ...= Re(k_{2n-2}) = 0.$$
Then $tr A = - l(a+\lambda_{2n})$. Now, taking traces in \eqref{Eq: alm-ab-matrix} one obtains that $tr (A^2+AA^t) = -(a+\lambda_{2n}) tr A$. Thus $\|S(A)\|^2= \frac{1}{2} l(a + \lambda_{2n})^2.$ This yields equality in Lemma\ref{Lem: normalop-estimate}, thus $A$ is a normal endomorphism i.e $[A,A^t] = 0$.

For the proof of the converse assertion we consider the matrix $B=(a+\lambda_{2n})A+A^2+A^tA$ in the left-hand side of  \eqref{Eq: alm-ab-matrix}. By hypothesis, $A$ is normal and a very simple computation shows that $B$ is also normal.  Recall that a normal matrix is skew-symmetric if and only if its eigenvalues are zero or purely imaginary. We make use of the spectral theorem for normal matrices to show that this is the case. The matrix $A$ is unitary conjugate to
$$\mathrm{diag}\left(-\frac{1}{2}(a+\lambda_{2n})+i \mathrm{Im}(k_1), -\frac{1}{2}(a+\lambda_{2n})+i \mathrm{Im}(k_2), \cdots, i \mathrm{Im}(k_{2n-3}), i\mathrm{Im}(k_{2n-2})\right).$$
Another computation will show that $B$ is unitary conjugate to
$$\mathrm{diag}(0,0, \cdots, i (a+\lambda_{2n}) \mathrm{Im}(k_{2n-3}),  i (a+\lambda_{2n}) \mathrm{Im}(k_{2n-2}))$$
and the result follows.

\end{proof}

In this situation  where $detA \neq 0,$ the non-vanishing  $1$-form $\a \in \frak g^*$  is given by
$$\a = \lambda_1 e^1 + \lambda_{2n}e^{2n} \neq 0,$$
with $a\lambda_1 = 0 $ and $\lambda_{2n} =-( 2Re(k)+ a),$ where $k$ is an eigenvalue of A.

We can see that even if $\a \neq 0,$ we get the  trivial   case  $(dH=0)$ by setting $\lambda_{2n}=0,$ $a=0$ and $\lambda_1 \neq 0$ (arbitrary), so
$$
(A^2 + A^t A) \in \mathfrak{so}(\frak n_1).
$$
Thus $\a = \lambda_1 e^1 \neq 0$, in this case the 3-form $H$ is degenerate.

A classification, up to isomorphism, of  $6$-dimensional  almost abelian Lie algebras  was given in \cite{M63}, corrected and completed by
\cite{SH}  (see also \cite{B,FP}).
By Corollary \ref{Cor: nilpotent-main}, the only 6-dimensional  nilpotent almost abelian Lie algebra  admitting a (trivial) LCSKT structure is  $\frak h_8 =(0,0,0,0,0,f^{12})$.

The six-dimensional non-nilpotent almost abelian Lie algebras admitting a complex structure are classified, up  to isomorphism,  in \cite[Theorem 3.2]{FP}
and they are  denoted  by $\frak l_i, i=1,\ldots,26$.

Using   the previous results,  we will   now  construct some explicit examples of   $6$-dimensional (non-nilpotent)  almost abelian Lie algebras admitting an LCSKT structure.  Recall that a Lie algebra $\frak g$  is unimodular if ${\mbox{tr} }(ad_X) =0$, for every $X \in \frak g$, and the unimodularity is a necessary condition for a Lie group to admit a lattice  \cite{JM}.

\begin{ex}
Consider the indecomposable  six-dimensional almost abelian Lie algebra
$$
l_8^{p,q,s}= \frak g_{6.8}^{p,q,q,s}=(pf^{16},qf^{26},qf^{36},sf^{46}+f^{56},sf^{56}-f^{46},0)$$ with $p \neq 0$ and $0 <  |q|\leq |p|$. The Lie algebra is  unimodular if  and only if  $s=-\frac{1}{2}(p+2q)$. Let $(J, g)$ be the almost  Hermitian structure given by
$$
J (f_1)= f_6,  J (f_2) = f_3, J (f_4) = f_5, \quad g=\sum_{k=1}^6 f^k \otimes f^k.
 $$
 Therefore
$$
J=\left(
    \begin{array}{ccc}
      0 & 0 & -1 \\
      0 & J_1 & 0 \\
      1 & 0 & 0 \\
    \end{array}
  \right)
, \quad
J_1 = \left(
        \begin{array}{cccc}
          0 &-1 & 0 & 0 \\
          1 & 0 & 0 & 0 \\
          0 & 0 & 0 &-1 \\
          0 & 0 & 1 & 0 \\
        \end{array}
      \right)
$$
and  the basis  $(f_1, \ldots, f_6)$  is adapted   to the  splitting
$$\frak l_8^{p,q,s} = \R_{f_1} \oplus \frak n_1 \oplus \R_{f_6}.$$
We have
$$B=(ad_{f_6})\vert_{\frak n} = \left(
                                 \begin{array}{cc}
                                   p & 0 \\
                                   0 & A \\
                                 \end{array}
                               \right),
$$  with
$$
A= \left(
  \begin{array}{cccc}
    q & 0 & 0 & 0 \\
    0 & q & 0 & 0 \\
    0 & 0 & s & 1 \\
    0 & 0 & -1 & s \\
  \end{array}
\right).
$$
Therefore,  $J$ is integrable,   since  $[A,J_1 ]=0$ and $J(\frak n_1) \subset \frak n_1,$ and the matrix $A$ is non-degenerate, since $q \neq 0$. Moreover,  we have also $[A,A^t]=0$, i.e. $A$ is normal. From Theorem \ref{Thm: alm-ab-main}, the Lie  algebra $\frak l_8^{p,q,s}$ admits an LCSKT structure  if either $s=q$ or $s=0$.   The non-vanishing 1-closed form  $\a$ is given by $\a=-(2q+p)f^6$  with  $0<|q|\leq |p|$ and $q \neq -\frac{p}{2}$.

We remark that, for generic parameters $p,q,s$ with $p\neq 0$ and $0<|q|\leq |p|$, the 3-form $H$ is given by $H=-2 (q f^{123}+s f^{145})$ and thus $dH= -2 q(2q+p) f^{1236} -2s(s+p) f^{1456}$.  In particular,  for the unimodular case $s = q = -\frac{p}{4}$, we have a non-trivial LCSKT structure with $dH = \frac{p^2}{4} (f^{1236}+ f^{1456}) \neq 0.$ Note that   $(J, g)$   is  LCB, since $v =0$ (see Remark \ref{LCBalmostab}).

\end{ex}

We now construct   an explicit example of  almost abelian  symplectic  solvmanifold $(M=\Gamma/G,J,g)$ endowed  with an  invariant   LCSKT  structure $(J,g)$.

In general  for a solvable Lie group it is  not easy  to find a lattice and to establish a general existence criterium, for more details see the references  \cite{B, CM12, AO}.  For almost abelian Lie groups there is the following sufficient criterion  (\cite[Proposition  2.1]{B}).

\begin{prop}[\cite{B}]
Let $G= \R \ltimes_{\varphi}\R^{m-1}$ be a $m$-dimensional  almost abelian Lie group. Then $G$ admits a lattice if and only if there exists a  real number $t_0 \neq 0$ for which $\varphi(t_0 )=exp(t_0 ad_{e_{2n}})$ can be conjugated to an integer matrix.
\end{prop}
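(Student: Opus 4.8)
The plan is to prove the two implications separately: from an integral conjugate of $\varphi(t_0)$ I would build a lattice explicitly, and from a given lattice I would recover such a conjugate.

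\emph{Sufficiency.} Suppose there is $t_0 \neq 0$ with $\varphi(t_0) = P C P^{-1}$ for some $P \in \mathrm{GL}_{m-1}(\R)$ and $C \in \mathrm{GL}_{m-1}(\Z)$. First I would note that $\Lambda := P\,\Z^{m-1}$ is a lattice in $\R^{m-1}$ with $\varphi(t_0)(\Lambda) = \Lambda$, since $\varphi(t_0)(Pv) = P(Cv)$ for every $v \in \Z^{m-1}$; because $C$ is invertible over $\Z$, each $\varphi(kt_0)$, $k \in \Z$, also preserves $\Lambda$. I would then set $\Gamma := t_0\Z \ltimes_{\varphi} \Lambda$ inside $G = \R \ltimes_{\varphi} \R^{m-1}$: the $\varphi(kt_0)$-invariance of $\Lambda$ is precisely what makes $\Gamma$ closed under the group law, $\Gamma$ is discrete because $t_0\Z$ and $\Lambda$ are discrete, and $G/\Gamma$ is compact because the projection $G \to \R$ onto the first coordinate descends to a fibre bundle $G/\Gamma \to \R/t_0\Z = S^1$ whose fibre is the torus $\R^{m-1}/\Lambda$. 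Hence $\Gamma$ is a lattice in $G$.

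\emph{Necessity.} Conversely, suppose $\Gamma \subset G$ is a lattice, and let $N = \R^{m-1}$ be the abelian ideal. If $G$ is nilpotent (i.e. $\varphi$ unipotent), I would invoke Malcev's criterion together with a short computation choosing a suitable $t_0$ to produce an integral conjugate of $\varphi(t_0)$; otherwise $N$ is the nilradical of $G$, and by the structure theory of lattices in simply connected solvable Lie groups (Mostow) the group $\Gamma \cap N$ is a lattice in $N \cong \R^{m-1}$ while the image $\pi(\Gamma)$ of $\Gamma$ under the projection $\pi : G \to G/N \cong \R$ is a lattice in $\R$, hence equals $t_0\Z$ for some $t_0 \neq 0$. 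Choosing $\gamma \in \Gamma$ with $\pi(\gamma) = t_0$, I would observe that conjugation by $\gamma$ preserves the normal subgroup $N$ and acts on it exactly as $\varphi(t_0)$; since $\gamma$ also normalises $\Gamma \cap N$, the lattice $\Gamma \cap N \subset \R^{m-1}$ is carried onto itself by $\varphi(t_0)$ (and by $\varphi(-t_0)$, using $\gamma^{-1}$). Letting $P \in \mathrm{GL}_{m-1}(\R)$ be the matrix whose columns form a $\Z$-basis of $\Gamma \cap N$, the matrix $P^{-1}\varphi(t_0)P$ maps $\Z^{m-1}$ bijectively onto itself, i.e. it lies in $\mathrm{GL}_{m-1}(\Z)$, which is the conjugate sought.

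The hard part will be the necessity direction, and specifically the structural input that $\Gamma \cap N$ is a lattice in $N$ and that $\pi(\Gamma)$ is discrete and cocompact in $\R$: this can fail for an arbitrary closed subgroup and genuinely uses solvability, either through Mostow's theorem or, in the almost abelian setting, through a direct Bieberbach-type argument applied to $\Gamma \cap N$. Once this is granted, the remainder is the elementary dictionary between ``$\varphi(t_0)$ preserves a lattice in $\R^{m-1}$'' and ``$\varphi(t_0)$ is conjugate to an element of $\mathrm{GL}_{m-1}(\Z)$''.
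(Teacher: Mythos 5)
The paper does not actually prove this proposition: it is quoted from Bock \cite{B} (with the explicit lattice $\Gamma_{t_0}=t_0\Z\ltimes P^{-1}\Z^{2n-1}$ taken from \cite{AO}), so there is no in-paper argument to compare against. Your outline reproduces the standard proof from that literature and is correct in its main lines: sufficiency via the $\varphi(t_0)$-invariant lattice $\Lambda=P\Z^{m-1}$ and the mapping-torus fibration $G/\Gamma\to\R/t_0\Z$ for cocompactness; necessity via Mostow's theorem that $\Gamma\cap N$ is a lattice in the nilradical $N$ and that $\pi(\Gamma)$ is a lattice in $G/N\cong\R$, followed by reading off $\varphi(t_0)$ in a $\Z$-basis of $\Gamma\cap N$. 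Your observation that the nilradical is either $\R^{m-1}$ or all of $G$ is correct, since $\R^{m-1}$ is an abelian ideal of codimension one.

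Two points need repair. First, in the sufficiency direction the aside ``because $C$ is invertible over $\Z$'' is doing real work: an integer matrix with nonzero determinant need not lie in $\mathrm{GL}_{m-1}(\Z)$, and if $|\det C|\geq 2$ then $\varphi(t_0)\Lambda\subsetneq\Lambda$ and your $\Gamma$ is not closed under inversion; in that case $G$ is non-unimodular and admits no lattice at all, so the statement must be read with $C\in\mathrm{GL}_{m-1}(\Z)$, i.e. $\det C=\pm1$ (automatic in the unimodular examples the paper considers, where $\det\varphi(t_0)=1$). You should make this hypothesis explicit rather than assume it. Second, the nilpotent case of necessity, which you defer to ``Malcev's criterion together with a short computation,'' is in fact immediate and requires no input from the lattice: if $\mathrm{ad}_{e_{2n}}|_{\R^{m-1}}$ is nilpotent then $\varphi(t_0)$ is unipotent for every $t_0$, hence conjugate to its Jordan normal form, which is already an integer matrix. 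With these two adjustments the argument is complete and coincides with the proof in \cite{B}.
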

In this case where there is a matrix P such that $P\varphi (t_0) P^{-1}$ is an integer matrix, a lattice is given by \cite{AO}
$$\Gamma_{t_0} =t_0 \mathbb{Z} \ltimes P^{-1}\mathbb{Z}^{2n-1}.$$
Recall that  a 6-dimensional Lie algebra  $\frak g$ is called symplectic, if there is a closed 2-form $\omega \in \Lambda^2 \frak g^*$ such that $\omega^3 \neq 0$.  The symplectic structure on $\frak g$ induces an invariant  symplectic structure on the quotient $\Gamma/G$.

\begin{ex}  Consider  the decomposable  six-dimensional  unimodular almost abelian Lie algebra
$${\frak l}_{23}^0= \frak g_{5.14}^0 \oplus \R=(f^{26},-f^{16},f^{46},0,0,0)$$
 which  corresponds  to the Lie group $(G_{5.14}^0 \times \R)$ (\cite{FP}, table 3, The Appendix). The Lie algebra  ${\frak l}_{23}^0$ admits the (integrable)  complex structure
  $$J(f_1 )=f_2,  \, J(f_3 )=f_5,   \, J(f_4)=f_6. $$
Applying the change of basis $e^1= -f^4$, $e^4=f^1$, $e^k=f^k$ for $k=2,3,5,6,$ we obtain
 the complex structure  given by
$$J(e_1)=e_6,  \, J(e_3)=e_5,  \, J(e_2 )=e_4.$$
Therefore  we  have  the usual orthonormal decomposition ${\frak l}_{23}^0= \R e_1  \oplus \frak n_1  \oplus   \R  e_6.$\\
If we consider the $J$-Hermitian metric $g = \sum_{i=1}^6 e^i \otimes e^i,$  the associated  fundamental 2-form $\Omega$ is  given  by $\Omega = f^{16} + f^{24} + f^{35}$. Moreover, by a direct computation , we have $H = e^{136}$  and thus  $dH  =0$.

By imposing the closure   of the 1-form $\a = \sum_{i =1}^6 \lambda_i e^i$  we get the  following constraints
$$\lambda_2=\lambda_4=\lambda_3=0, \lambda_1 \in \R.$$

By imposing the conditions \eqref{c2} and \eqref{c3}  we get  $\lambda_5=0.$
Consequently, the almost abelian Lie algebra $l_{23}^0$  admits a trivial LCSKT structure with  $\a=\lambda_1 e^1$,  for arbitrary $\lambda_1 \in \R-\{0\}$.

Moreover, by \cite[Theorem 1.1, Table 1]{CM12},  $exp(t ad_{e_6})$  is conjugated to an integer matrix, for $t=2\pi$ and $ t \in \{\pi,\frac{\pi}{2},\frac{\pi}{3}\}$, therefore  $(G_{5.14}^0 \times \R)$ admits lattices $\Gamma_t$.
As a consequence the solvmanifolds $M_t=(\Gamma_t/(G_{5.14}^0 \times \R),J,g)$ are LCSKT  and have first Betti number $b_1=5$  for $t=2\pi$  and $b_1=3$ for $t \in \{\pi,\frac{\pi}{2},\frac{\pi}{3}\}$. Moreover,  by  \cite[Theorem 1, Appendix B]{MM}  $\Gamma_t/(G_{5.14}^0 \times \R)$  has the invariant  symplectic form
$$\omega = \omega_{1,3}e^{13}+ \omega_{1,5}e^{15}+\omega_{1,6}e^{16}+\omega_{2,4}e^{24}+\omega_{2,6}e^{26}+\omega_{3,6}e^{36}+\omega_{4,6}e^{46}+\omega_{5,6}e^{56}$$
where $\omega_{2,4}(\omega_{1,3}\omega_{5,6}-\omega_{3,6}\omega_{1,5})\neq 0.$
Note that the 6-dimensional LCSKT almost abelian symplectic solvmanifolds $M_t=(\Gamma_t/(G_{5.14}^0 \times \R),J,g)$, are not balanced but they are LCB. Indeed,
by using (\ref{Eq: Lee}), the Lee form $\theta$  is given by $\theta =-e^5$, which is  closed.  Moreover,
the Chern-Ricci  form vanishes, but the Bismut Ricci form $\rho^B=- e^{16}$ is not zero.
\end{ex}

\section*{Acknowledgments}

The second author acknowledges the hospitality of the Department of Mathematics of the University of Torino during a one-month research stay which was funded by GNSAGA of  INdAM. The research of the second author was partially financed by Portuguese Funds through FCT (Fundação para a Ciência e a Tecnologia) within the projects UIDB/MAT/00013/2020 and UIBP/MAT/00013/2020. The third author was  supported by GNSAGA of  INdAM,  by the project PRIN 2017 \lq \lq Real and Complex Manifolds: Topology, Geometry and Holomorphic Dynamics” and by a grant from the Simons Foundation (\#944448).  The fourth author is grateful to Mohamed Hadji Brahim for  useful discussions. The authors would like also to thank  Fabio Paradiso and the anonymous referee for useful comments.

\end{document}